\documentclass{amsart}
\usepackage[T1,T2A]{fontenc}	
\usepackage[utf8]{inputenc}	

\usepackage[a4paper, margin=50pt]{geometry}

\usepackage{subcaption}

\usepackage{amsmath}
\usepackage{amssymb}
\usepackage{amsthm}
\usepackage{xparse}
\usepackage{enumitem}
\usepackage{mathtools} 
\usepackage{tikz-cd}
\usepackage{csquotes}

\usepackage{array}
\newcolumntype{C}{>{$}c<{$}}

\usepackage[normalem]{ulem}
\usepackage[style=numeric,sorting=nty,backend=biber,language=british]{biblatex}
\usepackage{bm}
\usepackage{dsfont}

\usepackage{graphicx}
\graphicspath{{images/}}

\usepackage{tabularx}

\usepackage{hyperref}
\usepackage{xcolor}
\usepackage{tcolorbox}
\hypersetup{%
	colorlinks=true,
	linkcolor=blue!66!black,
	citecolor=red!66!black,
	urlcolor=green!33!black
}

\theoremstyle{definition}
\newtheorem{definition}{Definition}[section]
\newtheorem{theorem}[definition]{Theorem}
\newtheorem{lemma}[definition]{Lemma}
\newtheorem{example}[definition]{Example}
\newtheorem{proposition}[definition]{Proposition}
\newtheorem{conjecture}{Conjecture}[section]
\newtheorem{question}{Question}[section]

\newtheorem*{remark}{Remark}
\newtheorem{corollary}[definition]{Corollary}

\renewcommand{\l}{\lambda}
\newcommand\G{\Gamma}

\renewcommand{\emptyset}{\varnothing}

\usepackage{scalerel}

\usepackage{tikz}
\usetikzlibrary{hobby}
\newcommand{\eye}{%
	\kern0.1ex%
	\begin{tikzpicture}[use Hobby shortcut]%
		\draw[thick,line cap=round] (0ex,0.5ex) .. (1ex,1ex) .. (2ex,0.5ex);%
		\draw[thick,line cap=round] (0ex,0.5ex) .. (1ex,0ex) .. (2ex,0.5ex);%
		\filldraw[thick,line cap=round] (1ex,0.5ex) circle (0.15ex);%
	\end{tikzpicture}%
	\kern0.1ex%
}
\usepackage{eso-pic}

\renewcommand\bar{\overline}

\renewcommand\hat{\widehat}
\renewcommand\tilde{\widetilde}

\usepackage{extarrows}

\newcommand\seq{\Longrightarrow}

\renewcommand{\nabla}{\triangledown}

\let\ophi\phi
\let\ovarphi\varphi
\renewcommand{\phi}{\ovarphi}
\renewcommand{\varphi}{\ophi}

\newcommand\N{\mathbb{N}}
\newcommand\Z{\mathbb{Z}}
\newcommand\Q{\mathbb{Q}}
\newcommand\R{\mathbb{R}}
\renewcommand\C{\mathbb{C}}

\renewcommand\sl{\mathfrak{sl}}
\newcommand\so{\mathfrak{so}}
\newcommand\gl{\mathfrak{gl}}

\newcommand\FourT{\operatorname{4T}}

\newcommand\foot[1]{{\renewcommand{\thefootnote}{\ensuremath{\,\dagger\,}}\footnote{#1}\addtocounter{footnote}{-1}}}

\usepackage{float}

\newcommand{\setp}[1]{\left\{#1\right\}}
\newcommand{\setpp}[2]{\left\{#1\mid#2\right\}}

\newcommand{\norm}[1]{\left\|#1\right\|}

\makeatletter

\def\env@sqcases{%
	\let\@ifnextchar\new@ifnextchar
	\left\lbrack
	\def\arraystretch{1.2}%
	\array{@{}l@{\quad}l@{}}%
}
\makeatother

\ExplSyntaxOn
\NewDocumentCommand{\colvec}{O{,} m}{
	\vector_main:nnnn{\\}{#1}{#2}
}
\seq_new:N \l__vector_arg_seq
\cs_new_protected:Npn \vector_main:nnnn #1 #2 #3{
	\seq_set_split:Nnn \l__vector_arg_seq{#2}{#3}
	\begin{pmatrix}
		\seq_use:Nnnn \l__vector_arg_seq{#1}{#1}{#1}
	\end{pmatrix}
}
\ExplSyntaxOff

\setlist[itemize]{leftmargin=2cm}

\usepackage{xstring}
\usepackage{ifthen}
\usepackage{etoolbox}
\usetikzlibrary{
	knots,
	hobby,
	calc,
	shapes.geometric,
	plotmarks,
	arrows.meta,
	tqft,
	fit,
	patterns,
	patterns.meta,
	decorations.markings,
	fadings,
	decorations.pathmorphing,
	fixedpointarithmetic,
    external
}
\makeatletter
\tikzset{
	>={Latex[round,width=2mm,length=2mm]},
	l/.style={
		line cap=round,
		line join=round
	},
	c/.style={
		circle,
		inner sep=0pt,
		outer sep=0pt,
		minimum size=#1,
	},
	c/.default=1cm,
	cf/.style={
		rectangle,
		inner sep=2pt,
		outer sep=0pt,
		minimum size=#1,
	},
	cf/.default=10pt,
	show direction/.style={
		postaction=decorate,
		decoration={
			markings,
			mark=at position #1 with {\arrow[opacity=1]{>>}}
		}
	},
	tcenter/.style={
		baseline={([yshift=#1]current bounding box.center)}
	},
	tcenter/.default=-0.5ex,
    	nb/.style={
		draw,
		fill=white,
		thick,
		inner sep=0pt,
		align=center,
		minimum size=#1
	},
	nb/.default=10pt,
	nbe/.style={
		draw,
		fill=white,
		thick,
		inner sep=0pt,
		text width=20pt,
		align=center,
		minimum width=#1,
		minimum height=11pt
	},
	nbe/.default=22pt,
	nbec/.style n args={2}{
		draw,
		fill=white,
		thick,
		inner sep=0pt,
		align=center,
		minimum width=#1,
		minimum height=#2
	},
	nbec/.default={22pt}{11pt},
	pics/arrow/.style={
		code={
			\draw[line width=0pt, black, l, {<[line width=0.8pt,length=2mm,width=1mm]}-] (-0.5ex,0) -- (0.5ex,0);
		}
	}
}

\pgfdeclaredecoration{width to zero}{initial}{
	\state{initial}[width=0pt, next state=line, persistent precomputation={%
		\pgfmathdivide{\pgflinewidth}{\pgfdecoratedpathlength}%
		\let\speed=\pgfmathresult%
		\pgfmathmultiply{\speed}{.5pt}%
		\let\decrement=\pgfmathresult%
		\let\curwidth=\pgflinewidth%
 	}]{}
	\state{line}[width=.5pt, persistent postcomputation={%
		\pgfmathsubtract{\curwidth}{\decrement}%
		\let\curwidth=\pgfmathresult%
	}]{%
		\pgfsetlinewidth{\curwidth}%
		\pgfsetarrows{-}%
		\pgfpathmoveto{\pgfpointorigin}%
		\pgfpathlineto{\pgfqpoint{.75pt}{0pt}}%
		\pgfusepath{stroke}%
	}
	\state{final}{%
		\pgfsetlinewidth{\pgflinewidth}%
		\pgfpathmoveto{\pgfpointorigin}%
		\pgfusepath{stroke}%
	}
}
\tikzset{%
	vanish/.style={%
		fixed point arithmetic,
		decoration={%
			snake,
			segment length=#1,
			amplitude=0.6mm
		},
		decorate,
		postaction={%
			draw,
			decorate,
			decoration={width to zero}
		}
	},
	vanish/.default=6mm
}

\makeatother

\usepackage{fp}
\def\chordR{1}
\def\chordRad{1cm}
\def\chordCircle{(0,0) circle (\chordRad)}

\def\chordPointDSet#1#2#3{\coordinate (#1) at ($(0,0)!\chordR!90-360*#3/#2:(1,0)$)}
\DeclareRobustCommand{\chord}[3]{%
    \tikzset{external/export next=false}%
	\begin{tikzpicture}[scale=#1,tcenter]
		\draw[thick, black, l] \chordCircle;
		\foreach \i/\j in {#3} {
			\chordPointDSet{a}{#2}{\i};
			\chordPointDSet{b}{#2}{\j};
			\ifthenelse{\i=\j}{
				\coordinate (c) at ($(0,0)!1.25!(a)$);
				\pgfmathsetmacro\chordRHalf{0.25*\chordR}
				\draw[thick, black, l] (c) circle (\chordRHalf);
			}{
				\pgfmathtruncatemacro{\chordaout}{180+90-360*\i/#2}
				\pgfmathtruncatemacro{\chordain}{180+90-360*\j/#2}
				\draw[ultra thick, black, l] (a) to[out=\chordaout,in=\chordain] (b);
			}
		}
		\foreach \i/\j in {#3} {
			\chordPointDSet{a}{#2}{\i};
			\chordPointDSet{b}{#2}{\j};
			\draw[black,c=4pt]
				(a) node[fill] {}
				(b) node[fill] {};
		}
	\end{tikzpicture}%
}
\DeclareRobustCommand{\chordp}[3]{%
    \tikzset{external/export next=false}%
	\begin{tikzpicture}[scale=#1,tcenter]
		\draw[thick, black, l, dotted] \chordCircle;
		\foreach \i/\j in {#3} {
			\chordPointDSet{a}{#2}{\i};
			\chordPointDSet{b}{#2}{\j};
			\ifthenelse{\i=\j}{
				\coordinate (c) at ($(0,0)!1.25!(a)$);
				\pgfmathsetmacro\chordRHalf{0.25*\chordR}
				\draw[thick, black, l] (c) circle (\chordRHalf);
			}{
				\pgfmathtruncatemacro{\chordaout}{180+90-360*\i/#2}
				\pgfmathtruncatemacro{\chordain}{180+90-360*\j/#2}
				\draw[ultra thick, black, l] (a) to[out=\chordaout,in=\chordain] (b);
			}
		}
		\foreach \i/\j in {#3} {
			\chordPointDSet{a}{#2}{\i};
			\chordPointDSet{b}{#2}{\j};
			\draw[black,c=4pt]
				(a) node[fill] {}
				(b) node[fill] {};

			\pgfmathsetmacro\afrom{90-360*\i/#2-270/#2}
			\pgfmathsetmacro\ato{90-360*\i/#2+270/#2}
			\draw[black,thick,l]
				($\chordR * cos(\afrom) *(1,0) + \chordR * sin(\afrom) *(0,1)$) arc (\afrom:\ato:\chordR);

			\pgfmathsetmacro\bfrom{90-360*\j/#2-270/#2}
			\pgfmathsetmacro\bto{90-360*\j/#2+270/#2}
			\draw[black,thick,l]
				($\chordR * cos(\bfrom) *(1,0) + \chordR * sin(\bfrom) *(0,1)$) arc (\bfrom:\bto:\chordR);
		}
	\end{tikzpicture}%
}
\newcommand{\tchord}[2]{\chord{0.4}{#1}{#2}}
\newcommand{\tchordp}[2]{\chordp{0.4}{#1}{#2}}

\DeclareRobustCommand{\cgraph}[3]{%
    \tikzset{external/export next=false}%
	\begin{tikzpicture}[scale=#1,tcenter]
        \def\chordR{0.5}
        \path (0,-0.8) (0,0.8);
        \edef\cgraphn{#2}
        \def\cgraphone{1}
        \ifx\cgraphn\cgraphone%
            \draw[black,c=3pt]
                (0,0) node[fill] {};
        \else%
    		\foreach \i in {1,...,#2} {
    			\chordPointDSet{a}{#2}{\i};
    			\draw[black,c=3pt]
    				(a) node[fill] {};
    		}
        \fi
		\foreach \i/\j in {#3} {
			\chordPointDSet{a}{#2}{\i};
			\chordPointDSet{b}{#2}{\j};
			\draw[very thick, black, l]
				(a) -- (b);
		}
	\end{tikzpicture}%
}
\newcommand{\tcgraph}[2]{\,\cgraph{0.6}{#1}{#2}}

\newcounter{tbridgecount}
\newcommand{\tbridge}{%
	\kern0.11cm%
    \stepcounter{tbridgecount}%
    \tikz[remember picture,tcenter] \coordinate (tbridge-\thetbridgecount) at (0,0);
    \AddToShipoutPictureFG*{%
        \tikzset{external/export next=false}%
        \begin{tikzpicture}[scale=0.4,tcenter,use Hobby shortcut,remember picture,overlay]
            \begin{scope}[shift={(tbridge-\thetbridgecount)}]
                \path (0,-1) (0,1);
                \draw[fill=white,draw=none]
                    (-0.4,-0.2) rectangle (0.4,0.2);
                \draw[black,thick,l]
                    (-0.3,0.2) .. (0,0.1) .. (0.3,0.2)
                    (-0.3,-0.2) .. (0,-0.1) .. (0.3,-0.2);
            \end{scope}
        \end{tikzpicture}%
    }%
	\kern0.11cm%
}
\newcommand{\tbridgeinner}[1]{%
	\kern0.11cm%
    \tikz[remember picture,tcenter] \coordinate (tbridge-#1) at (0,0);%
    \AddToShipoutPictureFG*{%
        \tikzset{external/export next=false}%
        \begin{tikzpicture}[scale=0.4,tcenter,use Hobby shortcut,remember picture,overlay]
            \begin{scope}[shift={(tbridge-#1)}]
                \path (0,-1) (0,1);
                \draw[fill=white,draw=none]
                    (-0.4,-0.2) rectangle (0.4,0.2);
                \draw[black,thick,l]
                    (-0.3,0.2) .. (0,0.1) .. (0.3,0.2)
                    (-0.3,-0.2) .. (0,-0.1) .. (0.3,-0.2);
            \end{scope}
        \end{tikzpicture}%
    }%
	\kern0.11cm%
}
\renewcommand{\leadsto}{%
    \mathop{%
    	\kern.5ex%
    	\begin{tikzpicture}
    		\draw[very thick,black,l,->,decorate,decoration={snake,amplitude=0.6mm,post length=1mm}]
    			(0,0) -- (1,0);
    	\end{tikzpicture}%
    	\kern.5ex%
    }%
    \limits%
}

\title{$\sl(2)$-weight system does not extend to a graph 4-invariant}
\author{Daniil Fomichev}
\address{D.F.: Saint-Petersburg State University, 199034, 7/9 Universitetskaya Emb., Saint-Petersburg, Russia}
\email{fomichev.d.s@yandex.ru}
\author{Maksim Karev}
\address{M.K.: Guangdong Technion-Israel Institute of Technology, 515603, 241 Daxue Lu, Shantou city, Guangdong province, P.R. China}
\email{maksim.karev@gtiit.edu.cn}
\author{Fedor Pavutnitskiy}
\address{F.P.: Mathematical Institute, University of Oxford, Andrew Wiles Building, Radcliffe Observatory Quarter (550), Woodstock Road, Oxford, OX2 6GG}
\email{fedor.pavutnitskiy@maths.ox.ac.uk}
\author{Sergey Usanov}
\address{S.U.: Faculty of Mathematics, HSE University, 6 Usacheva St., 119048 Moscow, Russia}
\email{sergei.usanov.acad@gmail.com}

\date{\today}
\begin{document}
\begin{abstract}
A long-standing question by S.~Lando asks whether the $\sl(2)$-weight system extends to a unique 4-invariant of graphs.
We show that, in full generality, the answer to this question is negative. However, for certain specializations of the weight system, extensions do exist. Explicit formulae for computing two such specializations of the weight system are already known. We construct recurrence relations for one additional such extension and discuss the last remaining specialization, which conjecturally admits an extension.
We also study the polynomial coefficients of the $\sl(2)$-weight system and resolve the questions concerning their extension.
\end{abstract}

\maketitle

\tableofcontents

\section{Introduction}
\subsection{Motivation and history of the problem}
Weight systems are functions on the set of chord diagrams obeying so-called $4T$-relations. The weight systems are principal objects in the theory of finite type invariants of knots (see~\cite{V90} and~\cite{K93}). Metrized Lie (super)algebras can be used as a natural source of weight systems (see, for example,~\cite{K93, BN95, CDBook}). S.~Chmutov and S.~Lando have proved in~\cite{ChL07} that the values of weight systems associated with the Lie algebra $\sl(2)$ and the Lie superalgebra $\gl(1|1)$ are determined by the intersection graphs of chord diagrams, i.e. if two chord diagrams share the intersection graph, the values of these two weight systems on these two chord diagrams are the same. The graph-theoretic counterparts of weight systems are called 4-invariants (see~\cite{landohopf}), and every 4-invariant corresponds to a weight system through the intersection graph map. The map from the space of 4-invariants to the set of weight systems is non‑surjective. For instance, it is not even clear whether a weight system whose values are determined by intersection graphs can be extended to a 4‑invariant. For the case of $\gl(1|1)$, an explicit 4‑invariant extending it is known. A long‑standing question of S.~Lando reads:

\begin{question}[\cite{ChL07}]\label{conjecture:lando}
    Does there exist a unique 4-invariant that extends the value of the $\sl(2)$-weight system?
\end{question}

A review of this question and related research can be found in~\cite{KL23} and the references therein. We highlight the following results: the unique 4‑invariant that extends the $\sl(2)$-weight system and is defined non‑trivially on all graphs with up to 8 vertices exists~\cite{K21}; some coefficients of the $\sl(2)$-weight system are related to numerical invariants of the corresponding intersection graphs~\cite{CDBook,KLMR14,BNV15}; and there are studies discussing the values of possible extensions on families of graphs~\cite{Z20,Z22,Z24D}. In~\cite{FK25}, the specialization of the $\sl(2)$-weight system corresponding to the 2‑dimensional representation of $\sl(2)$ was extended to arbitrary graphs using a simple deletion-contraction relation; this extension was proved in~\cite{YDJ25} to coincide with the straightforward extension arising from the formula for the standard representation of $\sl(2)$.

Letting $\mathbb{A}$, $\mathbb{G}$ and $\mathbb{I}$ be the complex vector spaces freely spanned by chord diagrams, (non-framed) graphs and intersection graphs respectively and $\Gamma\colon\mathbb{A}\to\mathbb{I}$ be the linearized intersection graph map, Lando's question can be equivalently rephrased in terms of the diagram:
\begin{figure}[H]
    \begin{tikzcd}[column sep=4.5em, row sep=4.2em, ampersand replacement=\&]
        \mathbb{A}
            \arrow[r, "\Gamma"]
            \arrow[d]
            \arrow[dr, "w_{\sl(2)}"{description}] \&
        \mathbb{I}
            \arrow[r, hook]
            \arrow[d, "\tilde{w}_{\sl(2)}"{description}] \&
        \mathbb{G}
            \arrow[d]
            \arrow[dl, dashed, "g"{description}] \\
        \mathbb{A}/\FourT
            \arrow[r, "\hat{w}_{\sl(2)}"'] \&
        \C[c] \&
        \mathbb{G}/\FourT
            \arrow[l, dashed, "\hat{g}"]
    \end{tikzcd}
    \caption{Lando's question: are there unique $g$, $\hat{g}$ making the diagram commute?}
\end{figure}

We answer the question of Lando in the negative by providing such an element (\emph{a certificate}) $\mathcal{C}\in \mathbb{I}$ that $w_{\sl(2)}(\mathcal{C}) \ne 0$ and the image of $\mathcal{C}$ in $\mathbb{G}/\FourT$ is $0$. The calculated value $\tilde{w}_{\sl(2)}(\mathcal{C})$ restricts the set of values of the variable $c$ for which the corresponding evaluation of the $\sl(2)$-weight system can possibly admit extension to $\mathbb{G}/\FourT$ to the roots of $w_{\sl(2)}(\mathcal{C})$. These are $-\frac{3}{32}$, $0$, $\frac{3}{8}$, $1$, and these values are the eigenvalues of the Casimir element in certain representations of $\sl(2)$.

The paper is structured as follows.
In Section~\ref{section:lando} we explain the recipe for obtaining the certificate and specify the four specializations that can possibly extend to 4-invariants.
In Section~\ref{section:3d} we construct an explicit recurrent graph relation for the specialization corresponding to the irreducible 3-dimensional representation of $\sl(2)$.
In Section~\ref{section:halfd} we discuss the specialization conjecturally corresponding to the oscillator representation of $\sl(2)$.
According to our computations, it extends to a unique graph 4-invariant for graphs with $\leq 10$ vertices.
In Section~\ref{section:coefs} we study extensions for the polynomial coefficients of the $\sl(2)$-weight system.
Leading coefficients up to $[c^{n-4}]$ are shown to extend to graph 4-invariants and the others, except $[c^0]$, are proved not to extend.

The question of the uniqueness of extensions of all the specializations and coefficients discussed above remains open. However, our results imply that even when the values of a weight system are completely determined by its intersection graphs, the weight system is not necessarily extendable to a graph 4‑invariant.

\subsection{Weight systems}

\begin{definition}
    For $n\in \N$, a \emph{chord diagram} $D$ of \emph{order} $n$ is a set of pairwise distinct $2n$ points positioned on an oriented circle along with a specific complete pairing of these points. The diagram is considered up to an orientation-preserving diffeomorphism of the circle. We denote the set of all chord diagrams of order $n$ by $\mathbf{A}_n$, and $\mathbf{A} = \bigcup_{n \ge 0} \mathbf{A}_n$. The complex vector space spanned by all the chord diagrams is denoted $\mathbb{A}$.
\end{definition}

In visual representations of chord diagrams, paired points are conventionally connected by a chord, hence the object's name. In this paper, it is assumed that all circles depicted in the figures are oriented counterclockwise.

V.~A.~Vassiliev's theory of finite-type knot invariants~\cite{V90, BN95, CDBook} motivates the study of the space of functions defined on the set of chord diagrams that satisfy specific relations known as the 4T-relations.

\begin{definition}\label{rel:4T}
    Let $M$ be a complex vector space. A linear function $w\colon \mathbb{A} \to M$ is called a \emph{weight system} if the following relations hold:

    \begin{align*}
        w\left(\tchordp{16}{0/8,-4/9}\right)
        -w\left(\tchordp{16}{0/8,-4/7}\right)
        +w\left(\tchordp{16}{0/8,-4/1}\right)
        -w\left(\tchordp{16}{0/8,-4/-1}\right)
        =0.
    \end{align*}
\end{definition}

Henceforth, the following convention is used. The diagrams may include additional chords with endpoints on the dashed arcs. However, only the chords with endpoints on the solid parts of the circles are explicitly depicted. All other chords not visible in the images are consistent across all four chord diagrams.

\begin{remark}
    In the theory of non-framed knot invariants, it is also necessary to impose so-called 1T-relations. When 1T-relations are not assumed, the corresponding functions are referred to as ``framed weight systems''. In this paper, we do not address 1T-relations and simply refer to ``framed weight systems'' as ``weight systems'' for conciseness.
\end{remark}

$\mathbb{A}$ forms a bialgebra with coproduct $D\mapsto \sum_{D'\subseteq D} D' \otimes (D\setminus D')$.
The naturally arising convolution of functions $f, g\colon \mathbb{A} \to R, (f\cdot g)(D) = \sum_{D' \subseteq D} f(D') g(D\setminus D')$, taking value in $\C$-algebra $R$ produces a 4-invariant when $f$ and $g$ are 4-invariants.~\cite{CDBook}

\begin{definition}\label{definition:mult}
    Define the product of two chord diagrams as follows: break the supporting circles of two chord diagrams at a point different from the endpoints, and then glue them together respecting the orientation.

    \begin{figure}[H]
        $\tchord{12}{0/8,-2/6,1/5}
        \tbridgeinner{1}
        \tchord{12}{0/4,2/6}
        =\tchord{10}{1/3,2/4,0/5,-1/-3,-2/-4}
        \sim\tchord{10}{2/4,3/5,0/1,-1/-3,-2/-4}
        =\tchord{12}{0/8,-2/6,1/2}
        \tbridgeinner{2}
        \tchord{12}{0/4,2/6}$
        
        \caption{The product of chord diagrams is unique modulo 4T-relations.}
    \end{figure}

    Different choices of break points, in general, lead to different resulting diagrams. However, the result of the evaluation of a weight system on such a product does not depend on this choice (e.g. see~\cite{CDBook} for details). For chord diagrams $D_1,D_2 \in \mathbf{A}$ and weight system $w$, by $w(D_1 D_2)$ we denote the result of the evaluation of $w$ on any diagram glued from $D_1$ and $D_2$. Similarly we define $w(D_1 D_2)$ for $D_1, D_2 \in \mathbb{A}$.
\end{definition}

\subsection{The \texorpdfstring{$\sl(2)$}{sl(2)}-weight system}
According to~\cite{K93,BN95}, any metrized Lie algebra yields a weight system taking values in the center of its universal enveloping algebra.
In a particular case of this general construction, namely for the Lie algebra $\sl(2)$, S.~Chmutov and A.~Varchenko~\cite{ChV97} proposed a recursive procedure to compute the value of the corresponding weight system without referencing the Lie algebra.
The center of the universal enveloping algebra of $\sl(2)$ is isomorphic to the polynomial algebra $\C[c]$ in one variable, where $c$ is the Casimir element determined by the chosen metric.
Here we take the metric to be the Killing form of $\sl(2)$\foot{Any other choice of the metric on $\sl(2)$ leads to a controllable rescaling of the coefficients of the $\sl(2)$-weight systems. See~\cite{ChV97,CDBook} for the details.}.

\begin{definition}\label{rel:ChV}
	The $\sl(2)$-weight system $w_{\sl(2)}\colon \mathbb{A} \to \C[c]$ is uniquely defined by the following axioms:
    \begin{enumerate}
		\item \emph{Normalization.} The value of $w_{\sl(2)}$ on the chord diagram with one chord equals $c$.
		\item \emph{Multiplicativity.} For two chord diagrams $D_1,D_2 \in \mathbb{A}$, we have $w_{\sl(2)}(D_1 D_2) = w_{\sl(2)}(D_1) w_{\sl(2)}(D_2)$.
		\item \emph{Leaf deletion.} If chord diagram $D$ has a leaf, i.e. a chord that intersects only one other chord, denoting the result of the deletion of the leaf by $D'$, we have $w_{\sl(2)}(D) = \left(c - \frac{1}{2}\right)w_{\sl(2)}(D')$.
        \item \emph{6T-relations.} The following relations hold:
    
        \begin{align*}
            w_{\sl(2)}\left(\tchordp{22}{0/11,-1/4,12/7}\right)
            - &w_{\sl(2)}\left(\tchordp{22}{0/11,1/4,12/7}\right)
            - w_{\sl(2)}\left(\tchordp{22}{0/11,-1/4,10/7}\right)
            + w_{\sl(2)}\left(\tchordp{22}{0/11,1/4,10/7}\right) = \\
            = &\frac{1}{2} w_{\sl(2)}\left(\tchordp{22}{-1/12,4/7}\right)
            - \frac{1}{2} w_{\sl(2)}\left(\tchordp{22}{-1/7,12/4}\right); \\
            w_{\sl(2)}\left(\tchordp{22}{0/11,-1/7,12/4}\right)
            - &w_{\sl(2)}\left(\tchordp{22}{0/11,1/7,12/4}\right)
            - w_{\sl(2)}\left(\tchordp{22}{0/11,-1/7,10/4}\right)
            + w_{\sl(2)}\left(\tchordp{22}{0/11,1/7,10/4}\right) = \\
            = &\frac{1}{2} w_{\sl(2)}\left(\tchordp{22}{-1/12,4/7}\right)
            - \frac{1}{2} w_{\sl(2)}\left(\tchordp{22}{-1/4,12/7}\right). \\
        \end{align*}
    \end{enumerate}
\end{definition}

The above-listed properties are known as \emph{Chmutov-Varchenko relations} for the $\sl(2)$-weight system.
Notably, the 4T-relations are not directly required, but are implied by the defining axioms.

An alternative recurrent way to compute the weight system found in~\cite{ChV97} consists of a single chord deletion relation:
\begin{align}\label{rel:chord-deletion}
    w_{\sl(2)}(D) = \left(c-\frac{k}{2}\right)w_{\sl(2)}(D_a) + \frac{1}{2}\sum_{1\leq i < j\leq k} (w_{\sl(2)}(D_{i,j}'') - w_{\sl(2)}(D_{i,j}^{\times})).
\end{align}
Here chord $a\in D$ is deleted from all right-hand side diagrams, $k$ is the number of $a$'s neighbors (chords intersecting it), the sum goes over the pairs of $a$'s neighbors and $D_{i,j}''$ and $D_{i,j}^{\times}$ correspond to the different ways of rearranging the ends of these chords:
\begin{align*}
    D = \chordp{0.6}{14}{0/7,-2/2,-5/5};\quad
    D_a = \chordp{0.6}{14}{-2/2,-5/5};\quad
    D_{i,j}'' = \chordp{0.6}{14}{-2/-5,2/5};\quad
    D_{i,j}^{\times} = \chordp{0.6}{14}{-2/5,2/-5}.
\end{align*}

By a \emph{specialization} of $w_{\sl(2)}$ at $c = t \in \C$ we mean the evaluation of the resulting polynomial at the specified point. 

\subsection{Graphs and framed graphs}

Every chord diagram $D$ gives rise to the corresponding \emph{intersection graph} $\G_D$ (e.g. see~\cite{chmutovI, chmutovII, chmutovIII} for discussions of applications of this construction to finite type knot invariants theory).

\begin{definition}
    The intersection graph $\G_D$ of a chord diagram $D$ is formed as follows: the set of vertices of $V(\G_D)$ is the set of chords of $D$. Two vertices are connected by an edge if the corresponding chords of the chord diagrams intersect (or, equivalently, the corresponding chords' endpoints interlace).
\end{definition}
\begin{figure}[H]
    $\tchord{16}{-1/9,1/7,-3/5,-5/3}%
    \quad \begin{tikzpicture}[scale=0.7, tcenter]
        \draw[ultra thick, black, l]
            (0,0) -- (0,1) -- (1,1) -- (1,0) -- (0,0) -- (1,1);
        \draw[black, c=4pt]
            (0,0) node[fill] {}
            (0,1) node[fill] {}
            (1,0) node[fill] {}
            (1,1) node[fill] {};
    \end{tikzpicture}$
    
    \caption{Example of the intersection graph of a chord diagram.}
\end{figure}

\begin{definition}
    A framing of a graph $G$ is a function $f\colon V(G) \to \Z/2\Z$. A graph endowed with a framing is called a framed graph.
\end{definition}

In this paper we only consider simple graphs, whether framed or not. Every graph has the canonical framing, which maps every vertex to $0$. By abuse of notation, we are going to abbreviate the pair $(G,f)$ consisting of a graph and its framing to just $G$. If framing is not specified, it is considered to be canonical.

We follow the convention from~\cite{FK25} for visually depicting graphs.
Framed vertices are displayed as rectangles with their framing inside (by abuse of notation, this framing will sometimes be used as the name of the vertex).
In general, vertices can be depicted explicitly, as labeled vertex groups or be omitted.
Grouped vertices are not shown individually, but are identified by a letter label. Vertex groups can have non-empty intersections, but they never contain explicit vertices.
If explicit vertex $v$ has an edge to group $a$, the graph has edges connecting $v$ to all vertices within $a$. The full set of neighbors of explicit vertex $v$ consists of its explicit vertex neighbors and all vertices in the neighbor groups.
Omitted vertices \& edges and vertex groups are assumed to be consistent between pictures in equations involving multiple graphs.

\begin{figure}[H]
    \centering
    \begin{subfigure}{0.45\textwidth}
        \centering
    	\begin{tikzpicture}[scale=0.6,tcenter,use Hobby shortcut]
    		\draw[ultra thick, black, l]
    			(0,0) -- (1,0)
    			(0,0) -- (0,1)
    			(0,0) -- (1,1)
    			(1,0) -- (0,1)
    			(1,1) -- (0.5,2)
    			(0,1) -- (0.5,2);
    		\draw[black,c=4pt]
    			(0,0) node[fill] {}
    			(1,0) node[fill] {}
    			(0,1) node[fill] {}
    			(1,1) node[fill] {}
    			(0.5,2) node[fill] {};
    		\draw[]
                (0.1,2) node {$w$}
    			(-0.3,-0.3) node {$v$};
    		\draw[thick,l,pattern={Lines[distance=1mm,angle=45,line width=0.2mm]}]
    			([closed]-0.5,1) .. (0.5,0.5) .. (1.5,1) .. (0.5,1.5) .. (-0.5,1);
    		\draw[]
    			(-0.8,1) node {$a$};
    		\draw[thick,l,pattern={Lines[distance=1mm,angle=-45,line width=0.2mm]}]
    			([closed]1,-0.5) .. (0.5,0.5) .. (1,1.5) .. (1.5,0.5) .. (1,-0.5);
    		\draw[]
    			(1.7,0) node {$b$};
    	\end{tikzpicture}%
    	$\leadsto$%
    	\begin{tikzpicture}[scale=0.6,tcenter]
    		\draw[ultra thick, black, l]
    			(1,-0.5) -- (0,0) -- (1,0.5);
    		\draw[black,c=4pt]
    			(0,0) node[fill] {};
    		\draw[]
    			(-0.3,0) node {$v$};
    		\draw[circle]
    			(1,0.5) node[nb] {$a$};
    		\draw[circle]
    			(1,-0.5) node[nb] {$b$};
    	\end{tikzpicture}
        \caption{Non-framed graph.}
    \end{subfigure}
    \begin{subfigure}{0.45\textwidth}
        \centering
    	\begin{tikzpicture}[scale=0.6,tcenter,use Hobby shortcut]
    		\draw[ultra thick, black, l]
    			(0,0) -- (1,0)
    			(0,0) -- (0,1)
    			(0,0) -- (1,1)
    			(1,0) -- (0,1)
    			(1,1) -- (0.5,2)
    			(0,1) -- (0.5,2);
    		\draw[black,cf]
    			(0,0) node[draw,fill=white] {$v$}
    			(0,1) node[draw,fill=white] {$x$}
    			(1,0) node[draw,fill=white] {$y$}
    			(1,1) node[draw,fill=white] {$z$}
    			(0.5,2) node[draw,fill=white] {$w$};
    		\draw[thick,l]
    			([closed]-0.7,1) .. (0.5,0.4) .. (1.7,1) .. (0.5,1.6) .. (-0.7,1);
    		\draw[]
    			(-1,1) node {$a$};
    		\draw[thick,l]
    			([closed]1,-0.7) .. (0.4,0.5) .. (1,1.7) .. (1.6,0.5) .. (1,-0.7);
    		\draw[]
    			(1.9,0) node {$b$};
    	\end{tikzpicture}%
    	$\leadsto$%
    	\begin{tikzpicture}[scale=0.6,tcenter]
    		\draw[ultra thick, black, l]
    			(1,-0.5) -- (0,0) -- (1,0.5);
    		\draw[black,cf]
    			(0,0) node[draw,fill=white] {$v$};
    		\draw[circle]
    			(1,0.5) node[nb] {$a$};
    		\draw[circle]
    			(1,-0.5) node[nb] {$b$};
    	\end{tikzpicture}
        \caption{Framed graph.}
    \end{subfigure}
    \caption{%
        If we only care about the neighborhood of vertex $v$ of a graph provided with a vertex grouping, we abbreviate it as illustrated.
        Here vertex $w$ is outside of the region of interest so we choose to omit it in the abbreviated figure.%
    }
\end{figure}

We will make use of adjacency matrices $A(G, \Z/2\Z)$ with coefficients in $\Z/2\Z$ both for non-framed and framed graphs.
For non-framed graphs, the diagonal of the matrix consists of zeroes; for framed graphs, the diagonal entries are the framings of the corresponding vertices.

By $\eta(G)$ we denote the corank of the adjacency matrix $A(G, \Z/2\Z)$.
This function appears in the extension of the weight system associated with the standard representation of $\gl(N)$ to graphs: $w_{\gl(N)}(G) = N^{\eta(G)}$~\cite{BN91,LZ10}.

We denote the sets of isomorphism classes of framed and non-framed graphs by $\mathbf{G}^{fr}$ and $\mathbf{G}$ respectively. The corresponding $\C$-vector spaces spanned by $\mathbf{G}$ and $\mathbf{G}^{fr}$ respectively are denoted $\mathbb{G}$ and $\mathbb{G}^{fr}$.
The following definition takes its origin in~\cite{lando2006j} (see also~\cite{K24,karev2015space}).

\begin{definition}\label{rel:gr4T}
    For complex vector space $M$, linear function $g\colon \mathbb{G}^{fr} \to M$ is called a \emph{4-invariant} if the following graph 4T-relations hold:
    \begin{align*}
        g\left(\begin{tikzpicture}[scale=0.6,tcenter]
            \draw[ultra thick,black,l]
                (0,0) -- (1,0)
                (0,1) -- (1,1)
                (0,0) -- (0,1);
            \draw[black,cf]
                (0,0) node[draw,fill=white] {$b$}
                (0,1) node[draw,fill=white] {$a$};
            \draw[black,circle]
                (1,1) node[nb] {$x$}
                (1,0) node[nb] {$y$};
        \end{tikzpicture}\right)
        -g\left(\begin{tikzpicture}[scale=0.6,tcenter]
            \draw[ultra thick,black,l]
                (0,0) -- (1,0)
                (0,1) -- (1,1);
            \draw[black,cf]
                (0,0) node[draw,fill=white] {$b$}
                (0,1) node[draw,fill=white] {$a$};
            \draw[black,circle]
                (1,1) node[nb] {$x$}
                (1,0) node[nb] {$y$};
        \end{tikzpicture}\right)
        =(-1)^b \left(
        g\left(\begin{tikzpicture}[scale=0.6,tcenter]
            \draw[ultra thick,black,l]
                (0,0) -- (2,0)
                (0,1) -- (2,1)
                (0,0) -- (0,1);
            \draw[black,cf]
                (0,0) node[draw,fill=white] {$b$}
                (0,1) node[draw,fill=white] {$a+b$};
            \draw[black,ellipse]
                (2,1) node[nbe] {$x\triangle y$};
            \draw[black,circle]
                (2,0) node[nb] {$y$};
        \end{tikzpicture}\right)
        -g\left(\begin{tikzpicture}[scale=0.6,tcenter]
            \draw[ultra thick,black,l]
                (0,0) -- (2,0)
                (0,1) -- (2,1);
            \draw[black,cf]
                (0,0) node[draw,fill=white] {$b$}
                (0,1) node[draw,fill=white] {$a+b$};
            \draw[black,ellipse]
                (2,1) node[nbe] {$x\triangle y$};
            \draw[black,circle]
                (2,0) node[nb] {$y$};
        \end{tikzpicture}\right)
        \right).
    \end{align*}
\end{definition}
The non-framed version of 4T-relations is obtained when the framing is canonical.

$\mathbb{G}$ forms a bialgebra with coproduct $G\mapsto \sum_{V'\subseteq V} G_{V'} \otimes G_{V\setminus V'}$.
The naturally arising convolution of functions $f, g\colon \mathbb{G} \to R$ taking value in $\C$-algebra $R$ produces a 4-invariant when $f$ and $g$ are 4-invariants.~\cite{CDBook}

Given a 4-invariant $g\colon \mathbb{G} \to M$, we can define a weight system $w$ by the rule: for $D \in \mathbf{A}$, $w(D) := g(\G_D)$. Notice that this construction works since the application of the intersection graph map to diagrams involved in a 4T-relation produces a graph 4T-relation.

Sometimes we will focus on stronger relations that imply 4T:
\begin{definition}
    For framed graph $G$, its 2T transform for pair $(a, b)$ of different vertices is expressed as follows:
    \begin{align*}
        \begin{tikzpicture}[tcenter,scale=0.8]
            \draw[ultra thick,black,l]
                (1,1) -- (0,1) (0,0) -- (1,0);
            \draw[ultra thick,black,dotted,l]
                (0,1) -- (0,0);
            \draw[black,cf]
                (0,0) node[draw,fill=white] {$b$}
                (0,1) node[draw,fill=white] {$a$};
            \draw[black,circle]
                (1,1) node[nb] {$x$}
                (1,0) node[nb] {$y$};
        \end{tikzpicture}
        \mapsto
        \begin{tikzpicture}[tcenter,scale=0.8]
            \draw[ultra thick,black,l]
                (2,1) -- (0,1) (0,0) -- (2,0);
            \draw[ultra thick,black,dotted,l]
                (0,1) -- (0,0);
            \draw[black,cf]
                (0,0) node[draw,fill=white] {$a+b$}
                (0,1) node[draw,fill=white] {$a$};
            \draw[black,circle]
                (2,1) node[nb] {$x$};
            \draw[black,ellipse]
                (2,0) node[nbe] {$x\triangle y$};
        \end{tikzpicture},
    \end{align*}
    where the dashed edge (absent or present initially) is preserved when $a=0$ and switched when $a=1$.
\end{definition}

The 2T-relations then are equality of the value of a function on graph $G$ and its value on its 2T transform.

In terms of adjacency matrices, the 2T transform for $(a, b)$ corresponds to adding $a$'s row to $b$'s row and then $a$'s column to $b$'s column. This operation corresponds to a basis change and does not alter the rank of the matrix.

Given a weight system, we can try to extend to a graph 4-invariant. Even though we know nothing about the uniqueness of the extension to non-framed graphs, an extension of a weight system to the space of framed graphs, if it exists, is not unique (see~\cite{karev2015space} for the discussion).

This paper deals with weight systems and 4-invariants with values in $\C$ and $\C[t]$ only.  For the details see, e.g.,~\cite{CDBook}. We refer to~\cite{KL23} as a state-of-the-art review of weight systems and 4-invariants.

\subsection{Accompanying materials}
\begin{enumerate}
    \item Certificate of non-extendability of $w_{\sl(2)}$: \url{https://github.com/fomichev-d/sl2-cert}.
    \item Table for $c=-\frac{3}{32}$: \url{https://github.com/fomichev-d/sl2_at_-3_32}.
    \item Linear regression pipeline for the $[c^{n-3}]$ coefficient: \url{https://github.com/Serdabel/w_sl2.a_3}.
\end{enumerate}

\subsection{Acknowledgements}
The authors thank S.~Chmutov for valuable comments on the coefficients of the $\sl(2)$-weight system, P.~Zakorko for pointing out 4-invariance of the natural extension of the weight system corresponding to the three-dimensional representation of $\sl(2)\cong\so(3)$ to graphs, and M.~Kazarian for an independent verification of non-extendability of the $\sl(2)$-weight system.

Support from the Basic Research Program of HSE University is gratefully acknowledged (HSE-BR-2025-023).

\section{\texorpdfstring{$\sl(2)$}{sl(2)}-weight system does not extend to a graph 4-invariant}\label{section:lando}

E.~Krasilnikov computed the unique $\sl(2)$-weight system extension to a 4-invariant of graphs with $\leq 8$ vertices~\cite{K21}.
For graphs on $n$ vertices, the original approach consisted of computing $w_{\sl(2)}$ on intersection graphs using Chmutov--Varchenko relations, iteratively extending the table to graphs for which there is a 4T-relation involving the given graph and three graphs with already computed values, solving the remaining system, which we refer to as the final system, as a linear system of 4T-relations with known values substituted and finally verifying that all 4T-relations hold.

\begin{table}[H]
    \centering
    \begin{tabular}{r|ccccccccccc}
        $n$ & 0 & 1 & 2 & 3 & 4 & 5 & 6 & 7 & 8 & 9 & 10 \\
        \hline
        $\dim \mathbb{G}_n$ & 1 & 1 & 2 & 4 & 11 & 34 & 156 & 1044 & 12346 & 274668 & 12005168 \\
        $\dim \mathbb{I}_n$ & 1 & 1 & 2 & 4 & 11 & 34 & 154 & 978 & 9497 & 127954 & 2165291
    \end{tabular}
    \caption{The number of graphs and intersection graphs on $n$ vertices.}
\end{table}

Using a similar approach and making use of optimizations including parallel computations and row-sparsity of the final system, we attempted to extend the system further.

In more detail, for each $n_i$ our algorithm does the following:
\begin{enumerate}
	\item Iterates chord diagrams with $n_i$ chords, computes and deduplicates their intersection graphs in order to iterate all intersection graphs on $n$ vertices.
		For each intersection graph, it computes the value of the $w_{\sl(2)}$ using the chord deletion Chmutov--Varchenko relation~(\ref{rel:chord-deletion}) and saves it to the table of computed values.

		Here and elsewhere table insertion and queries use a graph hash to reduce the required number of graph isomorphism checks.
		Krasilnikov's algorithm's hash was the multiset of vertex degrees.
		Our hash is finer, consisting of two lists: one with numbers of vertices of degrees $0, \ldots, n_i-1$ (providing the same information as the initial hash) and the other with numbers of vertices of 2-degrees $0,\ldots,n_i$, where vertex 2-degree is defined as the number of vertices to which there exists a path of degree exactly 2.
	\item Iteratively attempts to extend the table with 4T-relations.
		For every graph with unknown value, which are iterated in parallel, all 4T-relations with this graph are considered as a system of linear equalities.
        Each row corresponding to a 4T-relation consists of coefficients in $\Q$ at indices corresponding to graphs with unknown values and the right-hand side value in $\Q$ corresponding to substituted values of graphs with previously known values; at most four left-hand side coefficients are non-zero and are $\pm 1$.
		If this system admits a solution for the graph in question, it is used to extend the table at the end of the iteration.
		Once a full iteration produces no additions, the iterative step is finished.

		This iterative step is more expressive than Krasilnikov's since it considers the entire system of 4T-relations involving a given graph and not each of those relations separately.
	\item Constructs and solves a linear system of all 4T-relations with the remaining graphs.
		Each 4T-relation has at most four non-zero terms so a sparse representation for matrix rows is used.
		Gaussian elimination is eagerly done as new rows are added; if value of any graph is obtained, the table is extended and the row is removed.
		If a zero row with a non-zero right-hand side is obtained at any point, the algorithm terminates since the system has no solutions.
	\item Continues if no graphs remain and terminates otherwise since the extension is not unique.
	\item After all $n_i = 0, \ldots, n$ have been processed successfully, all 4T-relations are checked in parallel.
\end{enumerate}

Our program reproduced E.~Krasilnikov's table of values of $w_{\sl(2)}$ for $n \leq 8$ and failed to extend $w_{\sl(2)}$ to $n=9$ as the final system had no solutions.
Subsequently we obtained a certificate $\mathcal{C}\in \mathbb{I}$ proving that the $\sl(2)$-weight system indeed does not extend to graphs with $9$ vertices.

The certificate we computed is a linear combination of 3300 graph 4T-relations expanding to a linear combination of 5006 intersection graphs evaluating to non-zero polynomial $c\left(c-\frac{3}{8}\right)(c-1)(c+\frac{3}{32})$.
Thus we obtain:
\begin{proposition}\label{proposition:lando}\leavevmode
    \begin{enumerate}
        \item The $\sl(2)$-weight system does not extend to a 4-invariant of graphs.
            In other words, \textbf{Lando's question has a negative answer, and even in the case when a weight system factors through the intersection graph map, it is not necessarily extendable to a graph 4-invariant}.
        \item It is only possible for specializations of the $\sl(2)$-weight system to extend to 4-invariants of graphs for $c=0,\frac{3}{8},1,-\frac{3}{32}$.
    \end{enumerate}
\end{proposition}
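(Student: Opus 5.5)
The plan is to read both parts off the certificate $\mathcal{C}$ by a duality argument.

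Write $\mathcal{C}$ as a combination of graph 4T-relations,
\[
\mathcal{C}=\sum_{r} \lambda_r R_r,\qquad \lambda_r\in\Q,
\]
where each $R_r\in\mathbb{G}_9$ is the signed sum of the four graphs in one 4T-relation of Definition~\ref{rel:gr4T} (canonical framing). By construction $\mathcal{C}$ maps to $0$ in $\mathbb{G}/\FourT$. The essential point is that, after cancellation in $\mathbb{G}$, every graph occurring in $\mathcal{C}$ with nonzero coefficient is an intersection graph. Some individual relations $R_r$ involve non-intersection graphs on $9$ vertices (there are $274668-127954$ of them), but these must cancel in the sum, so that $\mathcal{C}\in\mathbb{I}_9$. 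Then
\[
\tilde w_{\sl(2)}(\mathcal{C})=\sum_{G}\mu_G\, w_{\sl(2)}(D_G)
\]
makes sense, where $D_G$ is any chord diagram with $\G_{D_G}=G$. This is well defined by the Chmutov--Lando theorem that $w_{\sl(2)}$ factors through $\Gamma$.

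For part (1), suppose a 4-invariant $g\colon\mathbb{G}\to\C[c]$ extends $w_{\sl(2)}$, so $g|_{\mathbb{I}}=\tilde w_{\sl(2)}$. Being a 4-invariant, $g$ kills every $R_r$, hence $g(\mathcal{C})=0$. Since $\mathcal{C}\in\mathbb{I}$, however, we also have $g(\mathcal{C})=\tilde w_{\sl(2)}(\mathcal{C})$. It therefore suffices to show $\tilde w_{\sl(2)}(\mathcal{C})\neq0$.

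For part (2), take a specialization $w_t$ at $c=t$ with an extension $g_t\colon\mathbb{G}\to\C$. The same argument gives
\[
0=g_t(\mathcal{C})=\tilde w_{\sl(2)}(\mathcal{C})\big|_{c=t}.
\]
So $t$ must be a root of
\[
c\left(c-\tfrac38\right)(c-1)\left(c+\tfrac{3}{32}\right),
\]
which gives exactly the four listed values. The same reasoning holds for any $\C$-algebra target, since evaluation at $c=t$ is a ring homomorphism.

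The remaining work is computational verification, and this is the main obstacle.
\begin{enumerate}
\item \emph{Check $\mathcal{C}$.} Verify that each of the 3300 listed relations is a genuine graph 4T-relation. Then expand the sum and check, by canonical labelling of graphs, that the surviving 5006 graphs are intersection graphs, certifying each with an explicit chord diagram $D_G$.
\item \emph{Evaluate.} Compute $w_{\sl(2)}(D_G)$ for each $D_G$ using the chord deletion relation~(\ref{rel:chord-deletion}), sum with the coefficients $\mu_G$, and confirm the polynomial above.
\end{enumerate}
Producing $\mathcal{C}$ in the first place is the hard step. I would obtain it from the inconsistent final system of Section~\ref{section:lando}: Gaussian elimination produced a zero row with nonzero right-hand side, and tracking the row operations gives the multipliers $\lambda_r$. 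Substituting the known intersection-graph values then yields the nonzero constant. Because this is exact rational arithmetic on a certificate that can be checked independently, the proof reduces to a finite, reproducible verification.
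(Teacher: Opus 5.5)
Your proposal is correct and is essentially the paper's argument. The paper's proof consists of exhibiting a certificate $\mathcal{C}\in\mathbb{I}_9$, a combination of 3300 graph 4T-relations whose expansion involves only 5006 intersection graphs and on which $\tilde{w}_{\sl(2)}$ evaluates to $c\left(c-\frac{3}{8}\right)(c-1)\left(c+\frac{3}{32}\right)$; your duality argument spells out why this yields both parts.

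One minor point concerns how you would produce $\mathcal{C}$. The known values substituted into the final system include values of non-intersection graphs obtained earlier from 4T-relations, so besides tracking the row operations you must also unfold those earlier derivations to land in $\mathbb{I}$. This does not affect the verification of the certificate, which is the actual proof.
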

\begin{proof}
    The certificate can be found at \url{https://github.com/fomichev-d/sl2-cert}.
\end{proof}

The factors of the certificate value correspond to eigenvalues of the Casimir element evaluated on specific representations of $\sl(2)$. As Casimir is a central element of the universal enveloping algebra of $\sl(2)$, it acts as a scalar operator in any irreducible representation. For $G\in \mathbf{G}$, let $V$ denote the set of vertices of $G$. Extension of the $w_{\sl(2)}$ may be possible only for those Casimir eigenvalues:
\begin{itemize}
    \item $c=0$ corresponds to the value of $w_{\sl(2)}$ for the 1-dimensional representation of $\sl(2)$. In this case we have the trivial extension $0^{|V|}$;
    \item $c=\frac{3}{8}$ corresponds to the irreducible 2-dimensional representation of $\sl(2)$ and admits an extension studied in~\cite{FK25, YDJ25}:
        \begin{align*}
        	\phi(G) = 2^{-2|V|} \sum_{U\subseteq V} \left(-\frac{1}{2}\right)^{|V\setminus U|} 2^{\eta(G|_U)};
        \end{align*}
    \item $c=1$ corresponds to the irreducible 3-dimensional representation of $\sl(2, \C) \cong \so(3, \C)$ and admits an extension described and investigated in Section~\ref{section:3d};
    \item $c=-\frac{3}{32}$ is conjectured to correspond to the oscillator representation of $\sl(2)$ and to admit an extension.  The oscillator representation and its decomposition into Verma modules are discussed in Section~\ref{section:halfd}.
    Our computations show that this extension exists and is unique for graphs with $\leq 10$ vertices.
\end{itemize}

\section{Extension of \texorpdfstring{$\sl(2)$}{sl(2)}-weight system specialized to the irreducible 3-dimensional representation}\label{section:3d}
\subsection{Specialization of \texorpdfstring{$\sl(2)$}{sl(2)}-weight system for the chord diagrams}

The Lie algebras $\sl(2,\C)$ and $\so(3,\C)$ are isomorphic. The standard way to see it is to consider the regular representation of $\sl(2,\C)$. The representation map $\sl(2,\C) \to \gl(\sl(2,\C))$ is injective, and the representation operators respect the Killing form, which is non-degenerate. Since the dimensions match, isomorphism follows.

The irreducible 3-dimensional representation of $\sl(2, \C)$ is the defining representation of $\so(3, \C)$. The standard Casimir element in this representation has a single eigenvalue $c = 1$.

\begin{theorem}\cite{BN91, BN95}\label{theorem:soN-repr}
    Consider all mappings $\sigma$ from chords of diagram $D$ to $\setp{1,-1}$.
    For $\sigma$, define $|\sigma|$ as the number of connected components left if all chords are doubled and each chord with $\sigma(c)=-1$ is then twisted:
    \begin{align*}
		\begin{tikzpicture}[tcenter,scale=0.6]
			\draw[black,very thick,l]
				(160:1cm) arc (160:200:1cm)
				(20:1cm) arc (20:-20:1cm)
				(-1,0) -- (1,0);
			\draw[black,c=4pt]
				(-1,0) node[fill] {}
				(1,0) node[fill] {};
		\end{tikzpicture}%
		\leadsto_{\sigma(c)=1}%
		\begin{tikzpicture}[tcenter,scale=0.6]
			\draw[black,very thick,l]
				(160:1cm) arc (160:175:1cm) -- (5:1cm) arc (5:20:1cm)
				(200:1cm) arc (200:185:1cm) -- (-5:1cm) arc (-5:-20:1cm);
		\end{tikzpicture};
        \quad
		\begin{tikzpicture}[tcenter,scale=0.6]
			\draw[black,very thick,l]
				(160:1cm) arc (160:200:1cm)
				(20:1cm) arc (20:-20:1cm)
				(-1,0) -- (1,0);
			\draw[black,c=4pt]
				(-1,0) node[fill] {}
				(1,0) node[fill] {};
		\end{tikzpicture}%
		\leadsto_{\sigma(c)=-1}%
		\begin{tikzpicture}[tcenter,scale=0.6]
			\draw[black,very thick,l]
				(160:1cm) arc (160:175:1cm) -- (-5:1cm) arc (-5:-20:1cm)
				(200:1cm) arc (200:185:1cm) -- (5:1cm) arc (5:20:1cm);
		\end{tikzpicture}.
    \end{align*}

    Then the following formula holds:
    \begin{align*}
        w_{\sl(2)}(D)|_{c=1}= 2^{-|D|} \sum_{\sigma\in\setp{1,-1}^{|D|}} \left(\prod_{c\in D} \sigma(c)\right) 3^{|\sigma| - 1}.
    \end{align*}
\end{theorem}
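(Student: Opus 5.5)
We prove the formula by evaluating the Kontsevich--Bar-Natan construction directly in the defining representation of $\so(3,\C)$, rather than by checking the Chmutov--Varchenko axioms. Take $\mathfrak g=\so(N)$ with the metric induced by the trace form on $\R^N$, normalised so that $\sl(2)\cong\so(3)$ with the Killing form corresponds to this metric up to an explicit scalar. For any metrized Lie algebra, the weight system $w_{\mathfrak g}(D)$ lies in the center of $U(\mathfrak g)$. In an irreducible representation $\rho$ it therefore acts as the scalar $\operatorname{tr}\rho(w_{\mathfrak g}(D))/\dim\rho$. For $\sl(2)$ with $\rho$ the 3-dimensional representation, the Casimir acts by $c=1$, so $w_{\sl(2)}(D)|_{c=1}=\frac13\operatorname{tr}_{\C^3}(w(D))$. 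This accounts for the factor $3^{-1}$ in $3^{|\sigma|-1}$. It remains to compute the trace combinatorially.

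The first step is to write $w(D)=\sum_{i}\prod e_{i}\cdots e^{i}$. Here each chord carries a pair of dual basis vectors, and these are placed along the circle in the order of the chord endpoints. Taking the trace in $\C^N$ turns this into a tensor contraction along the circle. The second step is to express the Casimir tensor $\sum_i \rho(e_i)\otimes\rho(e^i)$ for $\so(N)$ in index form. With an orthonormal basis $E_{ab}-E_{ba}$, one gets $\sum_i \rho(e_i)^{\alpha}_{\beta}\rho(e^i)^{\gamma}_{\delta}=\lambda(\delta^{\alpha\delta}\delta_{\beta\gamma}-\delta^{\alpha}_{\gamma}\delta_{\beta\delta})$, a difference of the two ways of reconnecting four strands. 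Diagrammatically, each chord is replaced by a doubled band, either untwisted or twisted, with sign $+1$ or $-1$ respectively. This is the standard $\so(N)$ skein identity. The third step is to expand the product over all chords. This gives a sum over $\sigma\in\{1,-1\}^{|D|}$ with sign $\prod\sigma(c)$, and the full contraction of Kronecker deltas in each term yields $N^{|\sigma|}$, where $|\sigma|$ counts the resulting closed curves. Setting $N=3$ and dividing by $\dim=3$ gives $\lambda^{|D|}\sum_\sigma\prod\sigma(c)\,3^{|\sigma|-1}$.

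The main obstacle is the normalisation constant $\lambda$, which must come out as $\tfrac12$ to produce the factor $2^{-|D|}$. It depends on how the Killing form of $\sl(2)$ compares with the trace form of $\so(3)$. The Killing form of $\so(3)$ is $(N-2)\operatorname{tr}=\operatorname{tr}$ on the defining representation. Dualising this form introduces the factor $\tfrac12$, since $\operatorname{tr}((E_{ab}-E_{ba})^2)=-2$. We will pin down both the sign and the size of $\lambda$ by checking the one-chord diagram. With $\sigma=+1$ the doubled band gives $|\sigma|=2$, contributing $3$. With $\sigma=-1$ the twisted band gives $|\sigma|=1$, contributing $-1$. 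Hence the formula yields $\tfrac12(3-1)=1=c$, which matches the normalization axiom of Definition~\ref{rel:ChV} and confirms $\lambda=\tfrac12$ with the correct sign convention.

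Finally, well-definedness needs no separate argument. Both sides come from the same Lie-algebraic construction, and the chord-diagram identity is exact at the level of traces, so no 4T or Chmutov--Varchenko verification is required.
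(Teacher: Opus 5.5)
Your argument is correct and is essentially the standard Bar-Natan derivation that the paper invokes by citing \cite{BN91, BN95} rather than proving: the Casimir tensor of $\so(3)$ for the Killing form ($=\operatorname{tr}$ on $\C^3$) is $\tfrac12$(untwisted $-$ twisted reconnection), each closed loop contributes $3$, and dividing the trace by $\dim=3$ gives the specialization at $c=1$. Two small points: $E_{ab}-E_{ba}$ is orthogonal rather than orthonormal (its square-norm $-2$ is exactly where $\lambda=\tfrac12$ comes from), and identifying the axiomatically defined $w_{\sl(2)}$ with the Lie-algebraic weight system relies on \cite{ChV97}.
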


\subsection{Extension to graphs}
Recall that for framed graph $G$, $\eta(G)$ is the corank of the adjacency matrix $A(G, \Z/2\Z)$.

\begin{proposition}\cite{LZ10,CL72}\label{proposition:eta-extends}
    $\eta(G)$ is a 2T-invariant that extends the value of $|\sigma|-1$ for diagrams $D$, where $\sigma(c)=1$ for each $c\in D$.
\end{proposition}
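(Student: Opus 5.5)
Two claims need proof: (i) $\eta$ is invariant under the 2T transform, and (ii) for a chord diagram $D$ with all chords untwisted, the number of boundary components $|\sigma|$ of the doubled diagram satisfies $|\sigma|-1=\eta(\G_D)$.

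For (i), use the matrix description given after the definition of the 2T transform. The transform for the pair $(a,b)$ replaces $A=A(G,\Z/2\Z)$ by $E^{T}AE$, where $E=I+e_{a}e_{b}^{T}$ over $\Z/2\Z$: the new $b$-row is the old $b$-row plus the $a$-row, and likewise for the $b$-column. First check that $E^TAE$ is exactly the adjacency matrix of the transformed framed graph, one entry type at a time.
\begin{itemize}
\item Off-diagonal entries $(b,v)$ with $v\ne a,b$ become $A_{bv}+A_{av}$, which is the neighbourhood $x\triangle y$.
\item The entry $(a,b)$ becomes $A_{ab}+A_{aa}$, so the edge $ab$ is switched exactly when the framing of $a$ is $1$.
\item The diagonal entry $(b,b)$ becomes $A_{bb}+A_{ab}+A_{ba}+A_{aa}=A_{bb}+A_{aa}$, i.e. framing $a+b$, as in the figure.
\end{itemize}
Since $E$ is invertible, the rank is preserved and hence so is the corank $\eta$.

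For (ii), I would use the classical argument of Cohn–Lempel (\cite{CL72}). With $\sigma\equiv 1$, the doubled diagram is the boundary of the orientable ribbon surface obtained by attaching a band to a disc along each chord. Its boundary components are computed by the permutation $\pi\circ\tau$, where $\tau$ is the chord involution on the $2n$ endpoints and $\pi$ the cyclic shift. The number of cycles of such a product is $1+\operatorname{corank}_{\Z/2\Z}$ of the interlace matrix. Topologically: $H_1$ of the surface with $\Z/2$ coefficients has a basis given by the cores of the bands closed up through the disc, and the intersection form in this basis is exactly $A(\G_D,\Z/2\Z)$, with zero diagonal because the bands are untwisted. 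The radical of the intersection form on a compact surface with $k$ boundary components has dimension $k-1$. Hence $|\sigma|-1=\eta(\G_D)$.

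I would alternatively give an inductive proof: delete chords one at a time and track how the boundary count and the rank change, using the 2T moves of (i) to reduce the matrix. The main obstacle is the identification of the radical's dimension with $k-1$, i.e. the Lefschetz-type duality statement for surfaces with boundary. I would take it from \cite{LZ10,CL72} rather than reprove it.
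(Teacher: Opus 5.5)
Your proposal is correct. The paper gives no argument of its own beyond citing \cite{LZ10,CL72} and its earlier remark that a 2T transform is a change of basis: add $a$'s row to $b$'s row and $a$'s column to $b$'s column.

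Your part (i) is that remark made explicit. Your entry-by-entry check that $E^{T}AE$, with $E=I+e_ae_b^{T}$, is the adjacency matrix of the transformed framed graph is right. This includes the switch of the edge $ab$ exactly when $a$ has framing $1$, and the new framing $a+b$ of $b$. All entries outside row and column $b$ are untouched, so nothing else needs checking.

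For part (ii) you supply the standard ribbon-surface argument that the paper leaves to \cite{CL72}. The mod-$2$ intersection form in the basis of closed band cores has off-diagonal entries recording interlacing. Its diagonal vanishes because the bands are untwisted.

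The step you flag as the main obstacle is short and need not be deferred:
\begin{itemize}
\item The pairing on $H_1(S;\Z/2\Z)$ factors through the perfect Lefschetz pairing $H_1(S;\Z/2\Z)\times H_1(S,\partial S;\Z/2\Z)\to\Z/2\Z$.
\item Hence its radical is $\ker\bigl(H_1(S)\to H_1(S,\partial S)\bigr)=\operatorname{im}\bigl(H_1(\partial S)\to H_1(S)\bigr)$.
\item Since $H_2(S)=0$, the map $H_2(S,\partial S)\cong\Z/2\Z\to H_1(\partial S)\cong(\Z/2\Z)^k$ is injective, its image being spanned by the sum of all boundary circles.
\item So the image of $H_1(\partial S)$ in $H_1(S)$ has dimension exactly $k-1$.
\end{itemize}
With that filled in, your argument is a self-contained proof, whereas the paper relies entirely on the references.
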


\begin{definition}\label{definition:psi}
    Graph function $\psi\colon\mathbb{G}\to\C$ is defined by:
    \begin{align*}
        \psi(G) := 2^{-|V|} \sum_{V'\subseteq V} (-1)^{|V'|} 3^{\eta(G')},
    \end{align*}
    where $G'\in\mathbf{G}^{fr}$ differs from $G\in\mathbf{G}$ in assigning framing $1$ to vertices $V'$ and $0$ to $V\setminus V'$.
\end{definition}

\begin{proposition}\label{proposition:psi-4T}
    $\psi$ is a 4-invariant.
\end{proposition}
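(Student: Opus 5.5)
The plan is to reduce the non-framed 4T-relation for $\psi$ to the 2T-invariance of $\eta$ on framed graphs (Proposition~\ref{proposition:eta-extends}, together with the remark that a 2T transform is a change of basis and so preserves the rank of $A(G,\Z/2\Z)$). I will expand
\begin{align*}
    2^{|V|}\psi(G)=\sum_{f\colon V\to\Z/2\Z}(-1)^{|f^{-1}(1)|}\,3^{\eta(G,f)}.
\end{align*}
For a non-framed 4T-relation with distinguished vertices $a,b$ and neighbourhoods $x$ (of $a$) and $y$ (of $b$), the four graphs are:
\begin{itemize}
    \item $G_1$: the graph with the edge $ab$;
    \item $G_2$: the same graph without the edge $ab$;
    \item $G_3$, $G_4$: the graphs in which $a$'s neighbourhood is replaced by $x\triangle y$, with and without the edge $ab$ respectively.
\end{itemize}
All four have the same vertex set, so the factor $2^{-|V|}$ is common. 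I will fix the framing on $V\setminus\{a,b\}$ and split the remaining sum according to the framings $(f_a,f_b)\in(\Z/2\Z)^2$ of $a$ and $b$. The goal is to show that, for each fixed outside framing and each value of $f_b$, the signed sum over $f_a$ of $3^{\eta(G_1)}-3^{\eta(G_2)}$ equals that of $3^{\eta(G_3)}-3^{\eta(G_4)}$.

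The key step is to apply the 2T transform for the pair $(b,a)$, meaning that $b$'s row and column are added to $a$'s. On a framed graph in which $a$ has neighbourhood $x$ and $b$ has neighbourhood $y$, this transform:
\begin{itemize}
    \item changes $a$'s outside neighbourhood to $x\triangle y$;
    \item changes $a$'s framing from $f_a$ to $f_a+f_b$;
    \item toggles the edge $ab$ exactly when $f_b=1$.
\end{itemize}
Since it preserves $\eta$, this gives a bijection between the framed versions of $\{G_1,G_2\}$ and those of $\{G_3,G_4\}$, and the two cases of $f_b$ are then handled separately.

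\emph{Case $f_b=0$.} The transform sends $(G_1,f_a,0)$ to $(G_3,f_a,0)$ and $(G_2,f_a,0)$ to $(G_4,f_a,0)$. The signs $(-1)^{f_a}$ agree, so the terms match one by one.

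\emph{Case $f_b=1$.} The transform sends $(G_1,f_a,1)$ to $(G_4,f_a+1,1)$ and $(G_2,f_a,1)$ to $(G_3,f_a+1,1)$. Writing $f_a'=f_a+1$, the $f_b=1$ part of the left-hand side becomes
\begin{align*}
    \sum_{f_a}(-1)^{f_a+1}\bigl(3^{\eta(G_4,f_a+1,1)}-3^{\eta(G_3,f_a+1,1)}\bigr)
    =\sum_{f_a'}(-1)^{f_a'+1}\bigl(3^{\eta(G_3,f_a',1)}-3^{\eta(G_4,f_a',1)}\bigr).
\end{align*}
The shift of $f_a$ by one flips the sign, and the swap of which graph carries the edge $ab$ flips it back. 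The result is exactly the $f_b=1$ part of the right-hand side. Summing both cases over all outside framings gives $\psi(G_1)-\psi(G_2)=\psi(G_3)-\psi(G_4)$.

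The main obstacle is bookkeeping rather than ideas. I must check that the 2T transform written in the Definition, with the roles of the two vertices taken as (fixed vertex $b$, modified vertex $a$), matches the paper's 4T-relation in the canonical-framing case. That means confirming the following points.
\begin{itemize}
    \item $y$ denotes $b$'s neighbourhood excluding $a$, so that the possible edge $ab$ does not contaminate $x\triangle y$.
    \item The diagonal entries transform as claimed: $a$'s framing becomes $f_a+f_b+2A_{ab}\equiv f_a+f_b$.
    \item The edge $ab$ toggles by $f_b$.
\end{itemize}
I would verify these directly on the $\Z/2\Z$ adjacency matrix via the congruence $A\mapsto PAP^{T}$ with $P=I+E_{ab}$. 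This congruence is invertible, which also makes transparent that it preserves the corank $\eta$.
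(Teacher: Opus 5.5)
Your proof is correct and follows essentially the same route as the paper. Both arguments fix the framing outside $\{a,b\}$, split the signed sum over the four framings of $a,b$, and rely on the 2T-invariance of $\eta$. The paper compresses this into the remark that the inner sum is a combination of four framed 4T-relations for $3^{\eta}$, which hold because $\eta$ is 2T-invariant. You carry out explicitly the 2T matching and the reindexing $f_a\mapsto f_a+f_b$ that this remark leaves implicit.
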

\begin{proof}
    Consider a non-framed 4T relation:
    \begin{align*}
        \begin{tikzpicture}[tcenter,scale=0.6]
			\draw[ultra thick,black,l]
				(1,1) -- (0,1) (0,0) -- (1,0)
				(0,1) -- (0,0);
			\draw[black,c=4pt]
				(0,0) node[fill] {}
				(0,1) node[fill] {};
			\draw[black,circle]
				(1,1) node[nb] {$u$}
				(1,0) node[nb] {$v$}
                (-0.3,1) node {$a$}
                (-0.3,0) node {$b$};
		\end{tikzpicture}
        -\begin{tikzpicture}[tcenter,scale=0.6]
			\draw[ultra thick,black,l]
				(1,1) -- (0,1) (0,0) -- (1,0);
			\draw[black,c=4pt]
				(0,0) node[fill] {}
				(0,1) node[fill] {};
			\draw[black,circle]
				(1,1) node[nb] {$u$}
				(1,0) node[nb] {$v$};
		\end{tikzpicture}
		+\begin{tikzpicture}[tcenter,scale=0.6]
			\draw[ultra thick,black,l]
				(1.5,1) -- (0,1) (0,0) -- (1.5,0);
			\draw[black,c=4pt]
				(0,0) node[fill] {}
				(0,1) node[fill] {};
			\draw[black,circle]
				(1.5,1) node[nb] {$u$};
			\draw[black,ellipse]
				(1.5,0) node[nbe=34pt] {$u\triangle v$};
		\end{tikzpicture}
		-\begin{tikzpicture}[tcenter,scale=0.6]
			\draw[ultra thick,black,l]
				(1.5,1) -- (0,1) (0,0) -- (1.5,0)
				(0,1) -- (0,0);
			\draw[black,c=4pt]
				(0,0) node[fill] {}
				(0,1) node[fill] {};
			\draw[black,circle]
				(1.5,1) node[nb] {$u$};
			\draw[black,ellipse]
				(1.5,0) node[nbe=34pt] {$u\triangle v$};
		\end{tikzpicture}.
    \end{align*}
    
    Consider the inner sum obtained while varying framings on $a$, $b$ with the rest of $V'$ fixed.
    With $(-1)^{|V'|}$ accounted for, this sum splits into four framed 4T relations on $3^{\eta(G')}$.
    Since $\eta(G')$ is a 2T-invariant, it is a framed 4T-invariant, and so is $3^{\eta(G')}$.
    The sum evaluates to zero.
\end{proof}

The statements listed above imply:

\begin{proposition}
    $\psi$ is a 4-invariant extending $w_{\sl(2)}$ at $c=1$.
\end{proposition}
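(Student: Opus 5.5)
The statement has two parts. That $\psi$ is a 4-invariant is Proposition~\ref{proposition:psi-4T}. What remains is to check that $\psi(\G_D) = w_{\sl(2)}(D)|_{c=1}$ for every chord diagram $D$. The plan is to compare the formula of Theorem~\ref{theorem:soN-repr} term by term with Definition~\ref{definition:psi}. Both are normalized by $2^{-|D|}=2^{-|V|}$, since chords of $D$ are vertices of $\G_D$. Both are sums over $2^{|D|}$ terms: in one, $\sigma$ runs over $\{1,-1\}^{|D|}$; in the other, $V'$ runs over subsets of $V$. We identify $\sigma$ with $V'=\sigma^{-1}(-1)$. Then $\prod_c \sigma(c) = (-1)^{|V'|}$, and it suffices to prove, for each $\sigma$,
\[
|\sigma| - 1 = \eta(G'),
\]
where $G'$ is $\G_D$ with framing $1$ exactly on $V'$.

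For $\sigma\equiv 1$ this is exactly Proposition~\ref{proposition:eta-extends}. For general $\sigma$ I would use the framed version of the same classical fact (Cohn--Lempel / Lando--Zhukov). A chord diagram in which some chords are twisted, i.e.\ a framed chord diagram, has a framed intersection graph whose diagonal entries record the twists. The number of boundary components of the associated ribbon surface (the circle with bands attached, a twisted band for each chord with $\sigma=-1$) is $\operatorname{corank}_{\Z/2}$ of the framed adjacency matrix plus one. By construction, the doubled-and-twisted curve in Theorem~\ref{theorem:soN-repr} is exactly this boundary, so $|\sigma|$ counts the boundary components of the surface whose framed intersection graph is $G'$.

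If the framed version is not taken as given, I would prove it by induction using moves that preserve both sides. The first move is the band slide of one chord's end along an adjacent chord. On the surface, a band slide is an isotopy, so it does not change the number of boundary components. On the graph, it acts as the 2T transform, which is a congruence of the adjacency matrix and so preserves $\eta$. The second move removes an isolated chord: an untwisted isolated chord adds a boundary component and a zero row and column, while a twisted one adds nothing and a $1$ on the diagonal. A third case is a pair of intersecting untwisted chords isolated from the rest; it adds nothing to $|\sigma|$ and contributes a nonsingular $2\times2$ block $\begin{pmatrix}0&1\\1&0\end{pmatrix}$ to the matrix. The step I expect to be the main obstacle is checking that band slides (2T moves) suffice to reduce any framed diagram to such a disjoint union of these pieces. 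In matrix terms, this amounts to symmetric elimination over $\Z/2$, which diagonalizes, or block-diagonalizes into hyperbolic $2\times2$ pieces, any symmetric matrix. One still has to check that every step of that elimination is realized by an actual band slide of chords.

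Once the identity $|\sigma|-1=\eta(G')$ is established, summing over $\sigma$ gives $w_{\sl(2)}(D)|_{c=1}=\psi(\G_D)$. Together with Proposition~\ref{proposition:psi-4T}, this proves the statement.
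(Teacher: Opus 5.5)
Your proposal is correct and is essentially the paper's argument: the paper obtains the statement directly from Proposition~\ref{proposition:psi-4T}, Theorem~\ref{theorem:soN-repr} and Proposition~\ref{proposition:eta-extends}, via exactly your term-by-term matching $\sigma\leftrightarrow V'=\sigma^{-1}(-1)$. You are right that for $\sigma\not\equiv 1$ this needs the framed (twisted-band) form $|\sigma|-1=\eta(G')$ of the corank formula, which the paper uses without stating; once that is cited your band-slide fallback is unnecessary, and its remaining open step is just the standard handle-slide normal form for a disk with bands.
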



\subsection{A recurrent relation for \texorpdfstring{$\psi$}{ψ}}
\begin{theorem}\label{theorem:del-vert-3d}
    Let $G$ be a graph, $x$ its vertex, $\deg x = k \geq 1$, $A = \setp{a_1, \ldots, a_k}$ its neighbors.
    Then the following recurrent relation takes place:
    \begin{align*}
        \psi(G)
        =\frac{(-1)^{k+1}}{2} \psi(\bar{G}_x)
        +\frac{(-1)^k}{4} \psi(\bar{G}_x^{a_1})
        +\frac{1}{4} \psi(G_x^{a_1}),
    \end{align*}
    where $\bar{G}_x$ is obtained from $G$ by deleting $x$ and inverting all edges within $G|_A$,
    $\bar{G}_x^{a_1}$ is obtained from $\bar{G}_x$ by applying the 2T transform to all pairs $(a_1, a_i)$ for $i=2, \ldots, k$ and then deleting $a_1$,
    $G_x^{a_1}$ is obtained from $G$ by deleting $x$, applying the same 2T transforms and deleting $a_1$.
\end{theorem}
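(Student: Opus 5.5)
The plan is to expand $\psi(G)=2^{-|V|}\sum_{V'\subseteq V}(-1)^{|V'|}3^{\eta(G')}$ according to the framing of $x$, and to eliminate $x$ from the adjacency matrix $A(G',\Z/2\Z)$ by symmetric row/column operations. These operations are exactly 2T transforms, which preserve $\eta$ (Proposition~\ref{proposition:eta-extends}). Write $e_A$ for the indicator vector of $A$.

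\textbf{Case $x\in V'$ (framing $1$).} The row of $x$ is $(1,e_A)$ with pivot $1$ on the diagonal. Adding the row and column of $x$ to those of each $a_i$ clears the row and column of $x$. The remaining block is the Schur complement $M+e_Ae_A^T$: the edges inside $A$ are inverted and every $a_i$ has its framing toggled. Thus $\eta(G')=\eta$ of $\bar G_x$ with framing set $V''\triangle A$, where $V''=V'\setminus\{x\}$. Reindex the sum over $V''$ by $W=V''\triangle A$; this costs a sign $(-1)^k$. Including the sign $-1$ of $x$, this half of the sum equals $(-1)^{k+1}2^{|V|-1}\psi(\bar G_x)$. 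After the prefactor $2^{-|V|}$ it gives the first term $\frac{(-1)^{k+1}}{2}\psi(\bar G_x)$.

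\textbf{Case $x\notin V'$ (framing $0$).} Since $k\geq1$, pivot on the off-diagonal pair $(x,a_1)$. First apply the 2T transforms for the pairs $(a_1,a_i)$, $i\geq2$; afterwards $x$ is adjacent only to $a_1$. Then use the row and column of $x$, which equal $e_{a_1}$, to clear all remaining entries in the row and column of $a_1$, including its diagonal. The nonsingular $2\times2$ block on $\{x,a_1\}$ splits off, so $\eta(G')$ equals the corank of the transformed graph with $x$ and $a_1$ deleted. The framing of $a_1$ disappears from the result but still affects the transforms, since the new $(a_i,a_j)$ entry is $M_{ij}+M_{1j}+M_{i1}+M_{11}=M_{ij}+f_{a_1}$ (including $i=j$).
\begin{itemize}
\item If $f_{a_1}=0$, nothing extra changes. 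We get $G_x^{a_1}$ with the inherited framings, and summing over the framings of $V\setminus\{x,a_1\}$ gives $2^{|V|-2}\psi(G_x^{a_1})$.
\item If $f_{a_1}=1$, all edges inside $A\setminus\{a_1\}$ are inverted and the framings of $a_2,\dots,a_k$ are toggled. Reindexing contributes $(-1)^{k-1}$, and the sign of $a_1\in V'$ contributes $-1$. What remains is $\bar G_x^{a_1}$, giving $(-1)^k2^{|V|-2}\psi(\bar G_x^{a_1})$.
\end{itemize}
With the prefactor these become $\frac14\psi(G_x^{a_1})$ and $\frac{(-1)^k}{4}\psi(\bar G_x^{a_1})$, as in the statement.

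\textbf{Main obstacle.} The delicate step is the last identification: that the graph obtained when $f_{a_1}=1$ is exactly $\bar G_x^{a_1}$ as defined, namely $\bar G_x$ followed by the 2T transforms $(a_1,a_i)$ and deletion of $a_1$. In $\bar G_x$ the edges $a_1a_i$ are already inverted, and the graph there is non-framed, so the 2T transforms use framing $0$. I would compare the two graphs entrywise on $V\setminus\{x,a_1\}$, splitting into pairs inside $A$, pairs with one end in $A$, and pairs outside $A$. The check I expect to need is that the neighbourhood of $a_i$ outside $A$ gets $\triangle N(a_1)$ in both constructions. Inside $A$ the inversions from the two constructions should combine correctly, because $N_{\bar G_x}(a_1)\cap A$ is the complement of $N_G(a_1)\cap A$. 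I would verify this index bookkeeping carefully before assembling the three terms.
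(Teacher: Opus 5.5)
Your proposal is correct and follows the paper's proof essentially step for step. You split on the framing of $x$; when $x$ is framed you clear it via the 2T transforms $(x,a_i)$, giving $\bar G_x$ with coefficient $\frac{(-1)^{k+1}}{2}$. Otherwise you apply the 2T transforms $(a_1,a_i)$, split off the nonsingular block on $\{x,a_1\}$, and split again on the framing of $a_1$ to obtain the other two terms. Your explicit check that the $f_{a_1}=1$ case really produces $\bar G_x^{a_1}$ is bookkeeping the paper leaves implicit: the three inversions inside $A\setminus\{a_1\}$ combine into one, and both constructions give $\triangle N(a_1)$ outside $A$.

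One small slip: for $i\neq j$ the transformed $(a_i,a_j)$ entry is $M_{ij}+M_{1i}+M_{1j}+f_{a_1}$, not $M_{ij}+f_{a_1}$. Your case analysis afterwards only uses that $f_{a_1}$ enters as an additive term on the whole $(A\setminus\{a_1\})$-block, so it is unaffected.
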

\begin{example}
    \begin{align*}
        \psi\left(\begin{tikzpicture}[tcenter, scale=0.6]
            \draw[ultra thick,black,l]
                (0,0) -- (0.7,1) -- (2,1)
                (0,0) -- (1,0) -- (2,0)
                (0,0) -- (0.7,-1) -- (2,-1)
                (0.7,1) -- (1,0);
			\draw[black,c=4pt]
				(0,0) node[fill] {}
				(0.7,1) node[fill] {}
				(1,0) node[fill] {}
				(0.7,-1) node[fill] {};
			\draw[black,circle]
				(2,1) node[nb] {$a$}
				(2,0) node[nb] {$b$}
				(2,-1) node[nb] {$c$};
        \end{tikzpicture}\right)
        =\frac{1}{2} \psi\left(\begin{tikzpicture}[tcenter, scale=0.6]
            \draw[ultra thick,black,l]
                (0.7,1) -- (2,1)
                (1,0) -- (2,0)
                (0.7,-1) -- (2,-1)
                (0.7,1) -- (0.7,-1) -- (1,0);
			\draw[black,c=4pt]
				(0.7,1) node[fill] {}
				(1,0) node[fill] {}
				(0.7,-1) node[fill] {};
			\draw[black,circle]
				(2,1) node[nb] {$a$}
				(2,0) node[nb] {$b$}
				(2,-1) node[nb] {$c$};
        \end{tikzpicture}\right)
        -\frac{1}{4} \psi\left(\begin{tikzpicture}[tcenter, scale=0.6]
            \draw[ultra thick,black,l]
                (1,0) -- (2.5,0)
                (0.7,-1) -- (2.5,-1)
                (0.7,-1) -- (1,0);
			\draw[black,c=4pt]
				(1,0) node[fill] {}
				(0.7,-1) node[fill] {};
			\draw[black,ellipse]
				(2.5,0) node[nbe] {$a\triangle b$}
				(2.5,-1) node[nbe] {$a\triangle c$};
        \end{tikzpicture}\right)
        +\frac{1}{4} \psi\left(\begin{tikzpicture}[tcenter, scale=0.6]
            \draw[ultra thick,black,l]
                (1,0) -- (2.5,0)
                (0.7,-1) -- (2.5,-1);
			\draw[black,c=4pt]
				(1,0) node[fill] {}
				(0.7,-1) node[fill] {};
			\draw[black,ellipse]
				(2.5,0) node[nbe] {$a\triangle b$}
				(2.5,-1) node[nbe] {$a\triangle c$};
        \end{tikzpicture}\right).
    \end{align*}
\end{example}
\begin{proof}
    Consider the part of the adjacency matrix of $G$ corresponding to vertices $x$, $a_1, \ldots, a_k$ in this order:
    \begin{align*}
        \begin{pmatrix}
            0 & 1 & 1 & \cdots & 1 \\
            1 & 0 & e_{1,2} & \cdots & e_{1,k} \\
            1 & e_{1,2} & 0 & \cdots & e_{2,k} \\
            \vdots & \vdots & \vdots & \ddots & \vdots \\
            1 & e_{1,k} & e_{2,k} & \cdots & 0 \\
        \end{pmatrix}.
    \end{align*}

    By definition of $\psi(G)$, we are interested in a sum with terms corresponding to all possible framings on the diagonal:
    \begin{align*}
        \begin{pmatrix}
            x & 1 & 1 & \cdots & 1 \\
            1 & a_1 & e_{1,2} & \cdots & e_{1,k} \\
            1 & e_{1,2} & a_2 & \cdots & e_{2,k} \\
            \vdots & \vdots & \vdots & \ddots & \vdots \\
            1 & e_{1,k} & e_{2,k} & \cdots & a_k \\
        \end{pmatrix}.
    \end{align*}

    Now we will perform transforms $\eta(G)$ is invariant against and account for the coefficients in the end.
    First we split the sum into two parts, one with $x=0$ and the other with $x=1$.

    Consider the $x=1$ case:
    \begin{align*}
        \begin{pmatrix}
            1 & 1 & 1 & \cdots & 1 \\
            1 & a_1 & e_{1,2} & \cdots & e_{1,k} \\
            1 & e_{1,2} & a_2 & \cdots & e_{2,k} \\
            \vdots & \vdots & \vdots & \ddots & \vdots \\
            1 & e_{1,k} & e_{2,k} & \cdots & a_k \\
        \end{pmatrix}.
    \end{align*}

    Since $\eta$ is 2T-invariant, we apply 2T transforms for pairs $(x, a_i)$ for $i=1,\ldots,k$:
    \begin{align*}
        \begin{pmatrix}
            1 & 0 & 0 & \cdots & 0 \\
            0 & \bar{a_1} & \bar{e_{1,2}} & \cdots & \bar{e_{1,k}} \\
            0 & \bar{e_{1,2}} & \bar{a_2} & \cdots & \bar{e_{2,k}} \\
            \vdots & \vdots & \vdots & \ddots & \vdots \\
            0 & \bar{e_{1,k}} & \bar{e_{2,k}} & \cdots & \bar{a_k} \\
        \end{pmatrix}.
    \end{align*}
    
    At this point the matrix decomposes into a $1\times 1$ identity block matrix for $x$ and the rest of the matrix.
    Removing the vertex (and its associated row and column) does not change the corank:
    \begin{align*}
        \begin{pmatrix}
            \bar{a_1} & \bar{e_{1,2}} & \cdots & \bar{e_{1,k}} \\
            \bar{e_{1,2}} & \bar{a_2} & \cdots & \bar{e_{2,k}} \\
            \vdots & \vdots & \ddots & \vdots \\
            \bar{e_{1,k}} & \bar{e_{2,k}} & \cdots & \bar{a_k} \\
        \end{pmatrix}.
    \end{align*}

    This corresponds to $\bar{G}_x$, the first term of the right-hand side.
    Now we can account for the coefficient $\frac{(-1)^{k+1}}{2}$: $\frac{1}{2}$ comes from removing vertex $x$, $(-1)$ from $x$ having framing $1$, $(-1)^k$ from $a_1, \ldots, a_k$ having their framings reversed.

    Consider the $x=0$ case:
    \begin{align*}
        \begin{pmatrix}
            0 & 1 & 1 & \cdots & 1 \\
            1 & a_1 & e_{1,2} & \cdots & e_{1,k} \\
            1 & e_{1,2} & a_2 & \cdots & e_{2,k} \\
            \vdots & \vdots & \vdots & \ddots & \vdots \\
            1 & e_{1,k} & e_{2,k} & \cdots & a_k \\
        \end{pmatrix}.
    \end{align*}

    Here we apply 2T transforms for pairs $(a_1, a_i)$ for $i=2,\ldots,k$:
    \begin{align*}
        \begin{pmatrix}
            0 & 1 & 0 & \cdots & 0 \\
            1 & a_1 & a_1+e_{1,2} & \cdots & a_1+e_{1,k} \\
            0 & a_1+e_{1,2} & a_1+a_2 & \cdots & a_1+e_{1,2}+e_{1,k}+e_{2,k} \\
            \vdots & \vdots & \vdots & \ddots & \vdots \\
            0 & a_1+e_{1,k} & a_1+e_{1,2}+e_{1,k}+e_{2,k} & \cdots & a_1+a_k \\
        \end{pmatrix}.
    \end{align*}

    In particular, in place of $e_{i,j}$ this matrix will have $a_1+e_{1,i}+e_{1,j}+e_{i,j}$ for $i,j>1$.

    Since the first row contains a single $1$, we can kill the whole column under it; similarly with the first column:
    \begin{align*}
        \begin{pmatrix}
            0 & 1 & 0 & \cdots & 0 \\
            1 & 0 & 0 & \cdots & 0 \\
            0 & 0 & a_1+a_2 & \cdots & a_1+e_{1,2}+e_{1,k}+e_{2,k} \\
            \vdots & \vdots & \vdots & \ddots & \vdots \\
            0 & 0 & a_1+e_{1,2}+e_{1,k}+e_{2,k} & \cdots & a_1+a_k \\
        \end{pmatrix}.
    \end{align*}

    The matrix decomposes into blocks once again so $x$ and $a_1$ can be removed:
    \begin{align*}
        \begin{pmatrix}
            a_1+a_2 & \cdots & a_1+e_{1,2}+e_{1,k}+e_{2,k} \\
            \vdots & \ddots & \vdots \\
            a_1+e_{1,2}+e_{1,k}+e_{2,k} & \cdots & a_1+a_k \\
        \end{pmatrix}.
    \end{align*}

    We split the sum again, this time for $a_1$.

    Consider the subcase of $a_1=0$:
    \begin{align*}
        \begin{pmatrix}
            a_2 & \cdots & e_{1,2}+e_{1,k}+e_{2,k} \\
            \vdots & \ddots & \vdots \\
            e_{1,2}+e_{1,k}+e_{2,k} & \cdots & a_k \\
        \end{pmatrix}.
    \end{align*}

    This corresponds to $G_x^{a_1}$, the third term of the right-hand side.
    The coefficient is $\frac{1}{4}$, which comes from removing vertices $x$ and $a_1$.

    Consider the subcase of $a_1=1$:
    \begin{align*}
        \begin{pmatrix}
            \bar{a_2} & \cdots & \bar{e_{1,2}+e_{1,k}+e_{2,k}} \\
            \vdots & \ddots & \vdots \\
            \bar{e_{1,2}+e_{1,k}+e_{2,k}} & \cdots & \bar{a_k} \\
        \end{pmatrix}.
    \end{align*}

    This corresponds to $\bar{G}_x^{a_1}$, the second term of the right-hand side.
    The coefficient is $\frac{(-1)^k}{4}$: $\frac{1}{4}$ comes from removing vertices $x$ and $a_1$, $(-1)$ from $a_1=1$, $(-1)^{k-1}$ from $a_2,\ldots,a_k$ having their framing reversed.
\end{proof}

A similar relation might also hold for the extension $\phi$ of the weight system corresponding to the 2-dimensional representation of $\sl(2)$.
It is:
\begin{conjecture}\label{conj:del-vert-2d}
    \begin{align*}
        \phi(G) = \left(\frac{3}{8}-1\right)\phi(G_x) + \frac{(-1)^{k+1}}{2}\phi(\bar{G}_x) + \frac{(-1)^k}{4}\cdot\frac{3}{8}\phi(\bar{G}_x^{a_1}) + \frac{1}{4}\cdot\frac{3}{8}\phi(G_x^{a_1}).
    \end{align*}
\end{conjecture}
Similarly to previous conventions, $G_x$ is $G\setminus\setp{x}$.
The conjecture was verified numerically for all graphs with $\leq 9$ vertices.
Below we prove it for the case of $k=2$ neighbors.

One might be tempted to replace $\frac{3}{8}$ with $c$ and consider $w_{\sl(2)}$.
Before we attempt to do so, we prove a lemma for $w_{\sl(2)}$ as a weight system on chord diagrams.

\begin{lemma}
    Let $D$ be a chord diagram with a chosen chord $x$, where \emph{$x$ intersects exactly two other chords}, which we will refer to as $a$ and $b$.
    Then the following equalities hold:
    \begin{align*}
        w_{\sl(2)}(\tchordp{20}{0/10,-3/3,13/7})
        &=(c-1) w_{\sl(2)}(\tchordp{20}{-3/3,13/7})
        -\frac{1}{2} w_{\sl(2)}(\tchordp{20}{-3/7,13/3})
        +\frac{1}{2} w_{\sl(2)}(\tchordp{20}{-3/13,7/3}); \\
        w_{\sl(2)}(\tchordp{20}{0/10,-3/7,13/3})
        &=(c-1) w_{\sl(2)}(\tchordp{20}{-3/7,13/3})
        -\frac{1}{2} w_{\sl(2)}(\tchordp{20}{-3/3,13/7})
        +\frac{1}{2} w_{\sl(2)}(\tchordp{20}{-3/13,3/7}).
    \end{align*}

    Moreover, if the diagram is such that to one side from $x$ there are no other endpoints between the endpoints of $x$ and the corresponding endpoints of $a$ and $b$, this equality can be expressed as follows:
    \begin{align}\label{rel:chord-k2}
        w_{\sl(2)}(\tchordp{20}{0/10,-1/3,11/7})
        &=(c-1) w_{\sl(2)}(\tchordp{20}{-1/3,11/7})
        -\frac{1}{2} w_{\sl(2)}(\tchordp{20}{-1/7,11/3})
        +\frac{1}{2} c \cdot w_{\sl(2)}(\tchordp{20}{7/3}); \\
        \nonumber
        w_{\sl(2)}(\tchordp{20}{0/10,-1/7,11/3})
        &=(c-1) w_{\sl(2)}(\tchordp{20}{-1/7,11/3})
        -\frac{1}{2} w_{\sl(2)}(\tchordp{20}{-1/3,11/7})
        +\frac{1}{2} c \cdot w_{\sl(2)}(\tchordp{20}{3/7}).
    \end{align}
\end{lemma}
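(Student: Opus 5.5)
The plan is to apply the chord deletion relation~(\ref{rel:chord-deletion}) to the chord $x$. Here $k=2$, so the sum has a single pair $(i,j)=(a,b)$. We get
\begin{align*}
    w_{\sl(2)}(D) = (c-1)\,w_{\sl(2)}(D_x) + \frac{1}{2}\left(w_{\sl(2)}(D''_{a,b}) - w_{\sl(2)}(D^{\times}_{a,b})\right).
\end{align*}
The first term already matches. What remains is to identify $D''_{a,b}$ and $D^{\times}_{a,b}$ with the pictures in the statement. In the pictures, $x$ joins a point on one side to a point on the other. On each side of $x$ there is one endpoint of $a$ and one endpoint of $b$, each on a solid arc.

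The relation~(\ref{rel:chord-deletion}) rearranges the ends of the two neighbours near the two endpoints of $x$. In the first configuration $a$ and $b$ do not cross each other; in the second they cross. I will determine which rearrangement is $D''$ (ends paired "parallel", i.e.\ the ends of $a$ and $b$ on the same side of $x$ swapped) and which is $D^{\times}$. To do this, I will trace the convention in the defining picture of $D''_{i,j}$ and $D^{\times}_{i,j}$. I expect that in the first identity one rearrangement yields the crossing pair (the second picture) and the other yields two chords each joining endpoints on the same side of $x$ (the third picture). The signs $-\frac12$ and $+\frac12$ then fall out. The second identity is the same computation with $a,b$ crossing. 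Here one rearrangement gives the uncrossed pair and the other gives the "same side" pairing. The main obstacle I expect is purely bookkeeping of orientations and signs: which reconnection counts as $D''$ versus $D^\times$ depends on the cyclic order of the four endpoints around $x$. I would verify it on a small example, for instance $D$ being three chords forming a path $a-x-b$. There $w_{\sl(2)}=c(c-\tfrac12)^2$ by leaf deletion, and this can be checked against the right-hand side.

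For the "moreover" part, consider the extra hypothesis: on one side of $x$ no other endpoints lie between the endpoints of $x$ and the adjacent endpoints of $a$ and $b$. Then, in the third diagram, the chord joining the two endpoints on that side has adjacent endpoints. Such a chord is isolated, i.e.\ it intersects no other chord. By multiplicativity and normalization it factors off as a one-chord diagram with value $c$. The remaining diagram is the single chord on the other side together with the dashed arcs, which gives the term $\frac12 c\, w_{\sl(2)}(\cdot)$. The second displayed equality of~(\ref{rel:chord-k2}) is obtained identically from the second identity. I would also check that the isolated chord appears in both identities on the same side, which again is the orientation bookkeeping from the first step.
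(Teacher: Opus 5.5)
Your route is the paper's: specialize the chord deletion relation~(\ref{rel:chord-deletion}) to $k=2$, then split off an isolated chord. However, your justification of the ``moreover'' part is wrong as written. You claim that the chord joining the endpoints of $a$ and $b$ on the distinguished side has adjacent endpoints. Write $a_1,b_1$ for the endpoints of $a,b$ on that side and $a_2,b_2$ for the others. The hypothesis only says that nothing lies between the endpoints of $x$ and $a_1$, $b_1$. Other chords may still have endpoints on the arc from $a_1$ to $b_1$, and the complementary arc contains the whole other side, including $a_2,b_2$. For example, add to the path $a-x-b$ a chord with both endpoints between $a_1$ and $b_1$: the hypothesis holds, but $a_1,b_1$ are not adjacent in $D''_{a,b}$.

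The chord $a_1b_1$ is nevertheless isolated, and the reason uses the assumption that $x$ meets only $a$ and $b$. Every other chord has both endpoints on one side of $x$. On the distinguished side those endpoints lie strictly between $a_1$ and $b_1$; this is exactly where the hypothesis is used. On the other side they, together with the chord $a_2b_2$, lie on the complementary arc. So no chord separates $a_1$ from $b_1$. The factor $c$ then follows because a diagram with an isolated chord $y$ is a product of the one-chord diagram with the two sub-diagrams on the arcs cut out by $y$. This uses that $w_{\sl(2)}$ of a product does not depend on the break points.

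Second, fix the convention instead of leaving it to a test. In (\ref{rel:chord-deletion}), $D''_{i,j}$ joins the two endpoints lying on the same side of the deleted chord, and $D^{\times}_{i,j}$ exchanges partners across it. This does not depend on whether $i$ and $j$ cross, contrary to your remark about the cyclic order. Your parenthetical description of $D''$ (``ends on the same side swapped'') actually describes $D^{\times}$.

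The path $a-x-b$ confirms the signs only in the first identity. In the crossing configuration, $K_3$ cannot distinguish the conventions, since both rearranged diagrams evaluate to $c^2$. The diamond does distinguish them: add a chord on one side of $x$ crossing both $a$ and $b$. With the convention above, the second identity comes out with $-\frac12$ on the uncrossed pair and $+\frac12$ on the same-side pair, as stated.
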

\begin{proof}
    The main relations are a trivial specialization of the Chmutov--Varchenko chord deletion relation~(\ref{rel:chord-deletion}) for the case of two neighbors.

    Relations (\ref{rel:chord-k2}) are obtained from the fact that one of the two chords in the last diagram in both relations is isolated, thus it can be removed with an addition of the $c$ multiplier by the usual Chmutov--Varchenko relations.
\end{proof}

Relations (\ref{rel:chord-k2}) can be written in terms of intersection graphs as follows:
\begin{align}\label{rel:graph-k2}
    w_{\sl(2)}(G) = (c-1) w_{\sl(2)}(G_x) - \frac{1}{2} w_{\sl(2)}(\bar{G}_x) + \frac{1}{2} c \cdot w_{\sl(2)}(G_x^a).
\end{align}
Note that the lemma proves it specifically for those intersection graphs that have a chord diagram with two adjacent pairs of endpoints.

For the specific case of the 2-dimensional representation $\phi$ at $c=\frac{3}{8}$, the lemma holds \emph{for all intersection graphs} because $\phi$ satisfies graph Chmutov--Varchenko relations; thus we obtain (\ref{rel:graph-k2}) for $\phi$ for arbitrary graphs.

All available data (extension of $w_{\sl(2)}$ to graphs with $\leq 8$ vertices, explicitly constructed $c=1$ specialization per Theorem~\ref{theorem:del-vert-3d}, the table of values for the $c=-3/32$ specialization for graphs with $\leq 10$ vertices) suggests that equality (\ref{rel:graph-k2}) might hold for all intersection graphs.
However, its generalization to the case of $k$ neighbors in the form similar to Conjecture~\ref{conj:del-vert-2d} fails on graphs with $\leq 8$ vertices.

\section{The oscillator representation}\label{section:halfd}
\subsection{The oscillator representation of \texorpdfstring{$\sl(2)$}{sl(2)}}
Consider the following operators on the space of entire functions on $\C^n$ (the Fock model uses polynomials $\C[z_1,\ldots,z_n]$):
\begin{align*}
    E=-\frac{1}{2}\sum \frac{\partial^2}{\partial z_i^2}; \quad
    F=\frac{1}{2}\sum z_i^2; \quad
    H=-\frac{n}{2}-\sum z_i \frac{\partial}{\partial z_i}.
\end{align*}
These operators satisfy all Lie algebra relations for $\sl(2)$ and generate \emph{the oscillator representation} of $\sl(2)$ on $\C^n$, or \emph{the Segal--Shale--Weil representation} (in a highest-weight reformulation --- we swap $E$ and $F$ and negate $H$).~\cite{G17}

We focus on the specific case of $n=1$:
\begin{align*}
    E=-\frac{1}{2}\frac{\partial^2}{\partial z^2}; \quad
    F=\frac{z^2}{2}; \quad
    H=-\frac{1}{2}-z\frac{\partial}{\partial z}.
\end{align*}
The Casimir element $C=\frac{1}{4}(EF+FE+\frac{1}{2}HH)$ here is simply the scalar operator of multiplication by $-\frac{3}{32}$.

Alternatively, this corresponds to the sum of infinite-dimensional Verma modules $M(-\frac{1}{2})\oplus M(-\frac{3}{2})$, where $M(\lambda)$ for $\lambda\in\C$ is generated by $v_i$, $i\geq 0$, $v_{-1}=0$, with operators:
\begin{align*}
    E v_i = (\lambda - i + 1) v_{i-1}; \quad
    F v_i = (i + 1) v_{i+1}; \quad
    H v_i = (\lambda - 2i) v_i.
\end{align*}
The Casimir element here is $C=\frac{1}{8}\lambda^2 + \frac{1}{4}\lambda$.~\cite{H08}
It evaluates to $-\frac{3}{32}$ for both $\lambda=-\frac{1}{2}$ and $\lambda=-\frac{3}{2}$. The highest weight vectors that generate the corresponding Verma modules as submodules of the oscillator representation are $1$ and $z$.

Given that the last remaining specialization of the $\sl(2)$-weight system possibly admitting extension is $c=-\frac{3}{32}$, we conjecture that such an extension exists and is related to the oscillator representation of $\sl(2)$.
\begin{conjecture}
    The specialization of the $\sl(2)$-weight system at $c=-\frac{3}{32}$ extends to a 4-invariant of graphs $\xi(G)$.
\end{conjecture}
The extension has been constructed for graphs with $\leq 10$ vertices and it has been verified that it is unique for these graphs.
The table of computed values can be found at \url{https://github.com/fomichev-d/sl2_at_-3_32}.

\subsection{A conjectured recurrent relation on graphs}
\begin{conjecture}
    Suppose that $\xi(G)$ is a 4-invariant of graphs extending the specialization of the $\sl(2)$-weight system at $c=-\frac{3}{32}$.
    Let $G$ be a graph, $x$ its vertex, $\deg x = 2$, $A = \setp{a_1, a_2}$ its neighbors.
    Then the following recurrent relation takes place:
    \begin{align*}
        \xi(G) = \left(\frac{-3}{32}-1\right)\xi(G_x) + \frac{-1}{2}\xi(\bar{G}_x) + \frac{1}{2}\cdot\frac{-3}{32}\xi(G_x^{a_1}),
    \end{align*}
    where $G_x$ is obtained from $G$ by deleting $x$,
    $\bar{G}_x$ is obtained from $G$ by deleting $x$ and inverting the edge $(a_1, a_2)$,
    $G_x^{a_1}$ is obtained from $G_x$ by applying the 2T transform to the pair $(a_1, a_2)$ and deleting $a_1$.
\end{conjecture}
This relation has been verified for all graphs with $\leq 10$ vertices.

\section{Polynomial coefficients of the \texorpdfstring{$\sl(2)$}{sl(2)}-weight system}\label{section:coefs}
Since $w_{\sl(2)}$ is a polynomial of degree $n$, another natural question to ask is: which coefficients of this polynomial extend to 4-invariants of graphs?
It is known that:
\begin{itemize}
    \item the leading coefficient $[c^n]$ is $1$;
    \item the second coefficient $[c^{n-1}]$ extends as $-\frac{|E|}{2}$~\cite{ChV97};
    \item the third coefficient $[c^{n-2}]$ explicitly extends to graphs~\cite{ChV97}. An explicit formula can be found in Subsection~\ref{subsection:n-3};
    \item the free coefficient $[c^0]$ is $0$ for $n>0$ and $1$ for $n=0$ (this extension corresponds to evaluation on the 1-dimensional representation of $\sl(2)$ discussed earlier).
\end{itemize}

But first consider the $\sl(2)$-weight system as defined on chord diagrams.
Theorem~3 in~\cite{ChV97} implies that coefficient $[c^{n-k}]$ of $w_{\sl(2)}$ on chord diagrams with $n \geq 2k$ chords as a weight system is a convolution $\mathbf{1}_{n-2k}\cdot f_{2k}$, where $f_{2k}$ is a weight system defined on diagrams with $2k$ chords and $\mathbf{1}_{l}$ stands for the weight system that takes value $1$ on any chord diagram with $l$ chords and $0$ otherwise.

We will now prove a similar theorem for graphs:
\begin{theorem}\label{thm:convolution}
    If coefficient $[c^{n-k}]$ of $w_{\sl(2)}$ admits extension $f_{2k}$ to a 4-invariant of graphs on $\leq 2k$ vertices, it admits extension $f$ to a 4-invariant of arbitrary graphs, where:
    \begin{align*}
        f(G) = \begin{cases}
            f_{2k}(G), & |V| \leq 2k; \\
            (\mathbf{1}_{|V|-2k} \cdot f_{2k})(G), & |V| \geq 2k.
        \end{cases}
    \end{align*}
\end{theorem}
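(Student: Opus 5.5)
Two things have to be checked: that $f$ is a 4-invariant, and that $f$ agrees with $[c^{n-k}]w_{\sl(2)}$ on intersection graphs. Consistency at $|V|=2k$ is immediate, since $\mathbf{1}_0\cdot f_{2k}$ restricted to graphs on $2k$ vertices is $f_{2k}$ itself: only $V'=\emptyset$ contributes from the $\mathbf{1}_0$ factor. For convenience I extend $f_{2k}$ by zero to graphs with more than $2k$ vertices; call this $\tilde f_{2k}$. It is still a 4-invariant, because 4T-relations preserve the number of vertices and hence are homogeneous. With this convention, for $|V|\ge 2k$ we simply have $f = \mathbf{1}\cdot\tilde f_{2k}$ in degree $|V|$. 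Here $\mathbf{1}$ is the function equal to $1$ on every graph, and it is a 4-invariant since all four graphs in a 4T-relation get the same value, $1-1+1-1=0$.

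\textbf{4-invariance.} In degree $\le 2k$ it holds by hypothesis. In degree $>2k$ I use the fact recalled in the paper that the convolution of two graph 4-invariants is a 4-invariant. I will also sketch why this holds, since that is the only real content. Take a 4T-relation on vertices $a,b$ with their neighbourhoods, and expand each of the four terms as a sum over subsets $V'$ on which $\tilde f_{2k}$ is evaluated, with $\mathbf{1}$ on the complement. Group the subsets by $S = V'\setminus\{a,b\}$ and by which of $a,b$ lie in $V'$.
\begin{itemize}
\item If both $a,b\in V'$, the four induced subgraphs $G_{V'}$ form a 4T-relation themselves: the operations only change the edge $ab$ and edges from $a$ to neighbours of $b$, and restricting to $V'$ commutes with this. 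The complement terms are all equal, so this group vanishes by 4-invariance of $\tilde f_{2k}$.
\item If both $a,b\notin V'$, the roles are reversed and the group vanishes by 4-invariance of $\mathbf{1}$.
\item If exactly one of $a,b$ lies in $V'$, the edge between $a$ and $b$ is cut. Then the first two graphs give identical pairs of pieces, and so do the last two, so the terms cancel in pairs.
\end{itemize}
The main obstacle is the third case when $b\in V'$ and $a\notin V'$. Here the neighbourhood of $a$ changes by $\triangle$ with $b$'s neighbourhood, and I must check that the induced pieces still coincide between the first and second terms. They do, because the first and second terms differ only in the edge $ab$, which is not induced in either piece.

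\textbf{Extension property.} Let $D$ have $n\ge 2k$ chords. By Theorem~3 of \cite{ChV97}, as quoted above, $[c^{n-k}]w_{\sl(2)}(D) = \sum_{D'\subseteq D} \mathbf{1}_{n-2k}(D\setminus D')\,f^{\mathrm{cd}}_{2k}(D')$, where $f^{\mathrm{cd}}_{2k}$ is the chord-diagram weight system on $2k$ chords. The intersection graph of a subdiagram $D'$ is the induced subgraph $\G_D|_{D'}$. Moreover, $f^{\mathrm{cd}}_{2k}(D') = [c^{k}]w_{\sl(2)}(D')$: apply the same formula with $n=2k$, where only $D'$ itself contributes. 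By hypothesis this equals $f_{2k}(\G_{D'})$. Substituting gives $[c^{n-k}]w_{\sl(2)}(D) = (\mathbf{1}_{n-2k}\cdot f_{2k})(\G_D) = f(\G_D)$, which completes the argument.
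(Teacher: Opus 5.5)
Your route is the same as the paper's. For 4-invariance in degrees above $2k$ you use the fact that a convolution of graph 4-invariants is a 4-invariant; the paper only cites this, and your verification by grouping subsets according to membership of $a,b$ is correct. (The ``main obstacle'' remark is unnecessary: the first two graphs, and likewise the last two, differ only in the edge $ab$, whichever neighbourhood is modified.) For the extension property you use Theorem~3 of \cite{ChV97} together with the fact that subdiagrams have induced subgraphs as intersection graphs. Your identification $f^{\mathrm{cd}}_{2k}(D')=[c^{k}]w_{\sl(2)}(D')$, obtained by taking $n=2k$, makes explicit a step the paper leaves implicit.

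There is one slip in the set-up, and as written it is false. You extend $f_{2k}$ by zero only to graphs with \emph{more} than $2k$ vertices, and then claim $f=\mathbf{1}\cdot\tilde f_{2k}$ in every degree $|V|\ge 2k$. But $(\mathbf{1}\cdot\tilde f_{2k})(G)=\sum_{V'\subseteq V}\tilde f_{2k}(G|_{V'})$ also collects the values of $f_{2k}$ on induced subgraphs with fewer than $2k$ vertices, and these are forced to be nonzero in general.

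For example, with $k=2$ you get $f_4(P_3)=[c^1]\,c\bigl(c-\tfrac12\bigr)^2=\tfrac14$. So for $G=P_3\sqcup K_1$ your formula gives $\tfrac14+\tfrac14=\tfrac12$ instead of $[c^2]w_{\sl(2)}(G)=\tfrac14$.

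The repair is immediate. Let $\tilde f_{2k}$ equal $f_{2k}$ on graphs with exactly $2k$ vertices and zero in every other degree; it is still a 4-invariant because 4T-relations are homogeneous. Alternatively, convolve with $\mathbf{1}_{|V|-2k}$ as the paper does. Your case analysis works verbatim for any pair of 4-invariants, so nothing else in the argument changes.
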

\begin{proof}
    We need to prove that $f = \mathbf{1}_{|V|-2k} \cdot f_{2k}$ is a 4-invariant extending $[c^{n-k}]$.
    
    It is a 4-invariant: for each $n$, it is a convolution of two 4-invariants.

    It extends $[c^{n-k}]$. Consider intersection graph $G$:
    \begin{align*}
        f(G)
        =(\mathbf{1}_{|V|-2k}\cdot f_{2k})(G)
        =\sum_{\substack{V' \subseteq V \\ |V'| = 2k}} f_{2k}(G|_{V'})
        =\sum_{\substack{V' \subseteq V \\ |V'| = 2k}} [c^{n-k}]w_{\sl(2)}(G|_{V'})
        =[c^{n-k}] w_{\sl(2)}(G).
    \end{align*}
    
    We use the fact that each $G|_{V'}$ itself is an intersection graph; the last equality follows from Theorem~3 in~\cite{ChV97}.
\end{proof}

\begin{corollary}
    Higher coefficients $[c^{n-k}]$ of $w_{\sl(2)}$ for $k \leq 4$ admit extension to 4-invariants of graphs.
    Higher coefficients $[c^{n-k}]$ for $k\geq 5$ do not admit such extensions.
    Lower coefficients $[c^k]$ for $k \geq 1$ do not extend.
\end{corollary}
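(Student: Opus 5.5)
The proof splits into three parts, one for each claim of the corollary. Each part either constructs an extension on small graphs or exhibits an obstruction.

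\emph{Positive part, $k\le 4$.} By Theorem~\ref{thm:convolution}, it suffices to extend $[c^{n-k}]w_{\sl(2)}$ to a 4-invariant $f_{2k}$ on graphs with at most $2k\le 8$ vertices. Krasilnikov's table~\cite{K21}, which our program reproduced, gives a 4-invariant extension $g$ of the whole polynomial $w_{\sl(2)}$ to all graphs with $\le 8$ vertices. The 4T-relations are homogeneous in the number of vertices and linear. Hence for each $n\le 8$ the coefficient $[c^{n-k}]g$ is a 4-invariant on graphs with $n$ vertices, and it agrees with $[c^{n-k}]w_{\sl(2)}$ on intersection graphs. Taking $f_{2k}:=[c^{|V|-k}]g$ and applying Theorem~\ref{thm:convolution} settles $k\le 4$. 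For $k\le 2$ this is already known from the explicit formulas, so only $k=3,4$ really need the table.

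\emph{Negative part, $k\ge 5$.} Here I use the certificate $\mathcal C$ from Proposition~\ref{proposition:lando}. It is a combination of intersection graphs on $9$ vertices whose image in $\mathbb G/\FourT$ vanishes, while
\[
w_{\sl(2)}(\mathcal C)=c\left(c-\tfrac38\right)(c-1)\left(c+\tfrac{3}{32}\right).
\]
The nonzero coefficients are those of $c^4,c^3,c^2,c^1$, which on $n=9$ vertices are $[c^{n-5}],\dots,[c^{n-8}]$, that is, $[c^k]$ for $k=1,\dots,4$. Any 4-invariant extension of such a coefficient would vanish on $\mathcal C$, which is a contradiction. This rules out $[c^{n-k}]$ for $k=5,\dots,8$ and the lower coefficients $[c^k]$ for $1\le k\le 4$ at $n=9$.

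To reach all larger $k$ (respectively all lower coefficients $[c^k]$ with $k\ge 5$), I will multiply the certificate by powers of the single vertex, $\mathcal C\cdot \bullet^m$, using disjoint union. This is still a combination of intersection graphs: take the product of chord diagrams with isolated chords. It still vanishes modulo 4T, since 4T-relations are compatible with disjoint union. By multiplicativity its value is $c^m w_{\sl(2)}(\mathcal C)$. This shifts the nonzero coefficients to $c^{m+1},\dots,c^{m+4}$ on $9+m$ vertices. In terms of $n-k$ these are still $k=5,\dots,8$, so the shift only helps the lower-coefficient claim. For $[c^j]$ with any $j\ge1$, choose $m=j-1$.

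For the higher coefficients $[c^{n-k}]$ with arbitrary $k\ge 5$, I need a certificate whose value has a nonzero $[c^{n-k}]$. The natural attempt is a product $\mathcal C^{r}\bullet^m$. It lives on $9r+m$ vertices, vanishes mod 4T, and has value $c^m\bigl(c(c-\tfrac38)(c-1)(c+\tfrac3{32})\bigr)^r$. This polynomial has nonzero coefficients from $c^{m+r}$ up to $c^{m+4r}$, covering codegrees $n-j$ for $j$ from $5r$ to $8r$. The union over $r\ge1$ of the intervals $[5r,8r]$ contains every integer $\ge5$, since consecutive intervals overlap once $8r\ge 5(r+1)-1$. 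I expect the main obstacle to be checking that all intermediate coefficients of this power are nonzero. A gap could occur through cancellation, because the factor $(c-\tfrac38)(c-1)$ has mixed signs. I would first try to verify nonvanishing directly by a sign/size argument or explicit expansion. Failing that, I would mix powers of $\mathcal C$ with suitable products to hit any missing index.
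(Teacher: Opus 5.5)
Your treatment of $k\le 4$ (taking coefficients of the extension on $\le 8$ vertices and applying Theorem~\ref{thm:convolution}) is the paper's argument. So is your treatment of the lower coefficients: in $\mathcal C\cdot\bullet^{j-1}$ the coefficient $[c^j]$ equals $\frac{9}{256}\ne 0$.

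The gap is in the higher coefficients with $k\ge 9$. Multiplying by $\bullet$ raises both the number of vertices and the degree by one, so it never changes codegrees. Hence $\mathcal C^r\bullet^m$, on $9r+m$ vertices with value $c^mP(c)^r$ where $P(c)=c(c-\frac38)(c-1)(c+\frac{3}{32})$, can only have nonzero coefficients at codegrees $k\in[5r,8r]$. Your overlap condition $8r\ge 5(r+1)-1$ is equivalent to $r\ge 2$, so it fails precisely at $r=1$. The intervals $[5,8]$ and $[10,16]$ miss $k=9$, and no choice of $r,m$ ever reaches it. Your construction therefore cannot show that $[c^{n-9}]$ does not extend. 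In addition, for $k\ge 10$ you still need all intermediate coefficients of $P(c)^r$ to be nonzero, which you leave unproved. The fallback ``mix with suitable products'' is not specified.

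What is missing is a factor that adds vertices faster than it raises the $c$-degree. The paper uses the operation $G\mapsto G\sqcup\bullet-G^{+\mathrm{leaf}}$. It multiplies the value by $c-(c-\frac12)=\frac12$ while adding one vertex. After $m$ applications the four nonzero coefficients of $P$ sit at codegrees $5+m,\dots,8+m$. For this operation modulo 4T the paper refers to \cite{K24}.

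Inside your own product framework you get the same effect with $\mathcal C\cdot(\bullet\bullet-K_2)^s$, where $K_2$ is the intersection graph of two crossing chords. Since $w_{\sl(2)}(\bullet\bullet-K_2)=c^2-c(c-\frac12)=\frac c2$, the value is $2^{-s}c^sP(c)$ on $9+2s$ vertices. Its leading coefficient $2^{-s}$ lies at codegree $5+s$. Taking $s=k-5$ settles every $k\ge5$ without any coefficient computation. Vanishing modulo 4T is immediate, because you only take disjoint unions with a fixed combination of intersection graphs. Equally, $\mathcal C\cdot K_2^{\,s}$ works via its lowest coefficient $\frac{9}{256}(-\frac12)^s$ at codegree $8+s$.
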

\begin{proof}
    Coefficients for $k \leq 4$ extend because $w_{\sl(2)}$ extends to graphs with $\leq 8$ vertices.

    Coefficients $k=5,6,7,8$ do not extend, which is shown by the certificate in Proposition~\ref{proposition:lando}: there is a combination of 4T-relations expanding to a combination of intersection graphs on $9$ vertices evaluating to the polynomial $c\left(c + \frac 3{32}\right)\left(c - \frac 38\right)\left(c - 1\right) = c^4-\frac{41}{32}c^3+\frac{63}{256}c^2+\frac{9}{256}c$.

    To show why coefficients $k>8$ do not extend either, we introduce the following transform of the certificate.
    By Chmutov-Varchenko relations, adding a leaf to an intersection graph multiplies the original $w_{\sl(2)}$ value by $c-\frac{1}{2}$.
    Adding an isolated vertex multiplies the value by $c$.
    Thus a transform mapping intersection graph $G$ to the difference of $G$ with an isolated vertex added and $G$ with an arbitrary leaf added multiplies the value by $\frac{1}{2}$ (this operation was discussed in detail in~\cite{K24}).
    If we apply this transform to the original certificate $m$ times, we will have a polynomial with the same non-zero coefficients obtained as a combination of 4T-relations expanding to a combination of intersection graphs on $9+m$ vertices.
    This time these coefficients will correspond to $k=5+m,6+m,7+m,8+m$.

    Coefficients $[c^k]$ for $k \geq 1$ do not extend because adding $k-1$ isolated vertices to each graph of the original certificate produces a counterexample for graphs on $8+k$ vertices.
\end{proof}

\subsection{The case of \texorpdfstring{$n-3$}{n-3}}\label{subsection:n-3}

It was shown above that the leading coefficients of the $\sl(2)$-weight system extend to graphs, closed formulae for the first ones are already known.
In this section we develop a general tool that reconstructs such closed formulae from finite data and apply it to obtain coefficient $[c^{n-3}]$.

To motivate the approach, let us first look at the already known formulae written in the same graphical notation that we will use for the main formula below:
\begin{itemize}
	\item $[c^n] = \emptyset$;
	\item $[c^{n-1}] = -\frac{1}{2}\,\cgraph{0.4}{2}{0/1}$;
	\item $[c^{n-2}] = \frac{1}{4}\,\cgraph{0.4}{4}{0/1,2/3} + \frac{1}{4}\,\cgraph{0.4}{3}{-1/0,0/1} - \frac{1}{4}\,\cgraph{0.4}{3}{0/1,1/2,2/0} + \frac{1}{2}\,\cgraph{0.4}{4}{0/1,1/2,2/3,3/0} - \cgraph{0.4}{4}{0/1,0/2,0/3,1/2,1/3,2/3}$%
    \foot{%
        On chord diagrams the same coefficient is classically written as $\frac{1}{4}\frac{e(e-1)}{2} - \frac{1}{4}t + \frac{1}{2}q_{\mathrm{cd}}$, where $e,t$ count crossings and triangles and $q_{\mathrm{cd}}$ counts \emph{quadrangles}~\cite{ChV97}.
        The discrepancy with the expression above arises because quadrangles are defined differently for chord diagrams and graphs: in language of intersection graphs, a quadrangle is a four-vertex \emph{induced} subgraph containing a spanning $4$-cycle, of which there are three types $\left(\cgraph{0.4}{4}{0/1,1/2,2/3,3/0}, \cgraph{0.4}{4}{0/1,0/2,0/3,1/2,3/2}, \cgraph{0.4}{4}{0/1,0/2,0/3,1/2,1/3,2/3}\right)$, whereas our expression counts $\cgraph{0.4}{4}{0/1,1/2,2/3,3/0}$ and $\cgraph{0.4}{4}{0/1,0/2,0/3,1/2,1/3,2/3}$ range over all, not necessarily induced, subgraphs.
        A single quadrangle of a chord diagram is therefore recorded once by $q_{\mathrm{cd}}$ but contributes through several graph subgraphs; matching the two conventions gives $q_{\mathrm{cd}} = \cgraph{0.4}{4}{0/1,1/2,2/3,3/0} - 2\,\cgraph{0.4}{4}{0/1,0/2,0/3,1/2,1/3,2/3}$, which turns the chord-diagram formula into the one above and accounts for the extra term $-\,\cgraph{0.4}{4}{0/1,0/2,0/3,1/2,1/3,2/3}$.%
    }.
\end{itemize}

Here each small graph $G_i$ stands for the graph-counting function $f_i$ of the depicted graph: this is the same as in the feature matrix (Definition~\ref{def:feature-matrix} below), so $\cgraph{0.4}{3}{0/1,1/2,2/0}$, for instance, denotes $f_i$ with $G_i$ the triangle.
In particular, the term $\frac{e(e-1)}{8}$, which, multiplied by $4$, counts unordered pairs of edges, is itself of this form: two edges either are disjoint or share a vertex, so it splits as $\frac{1}{4}\,\cgraph{0.4}{4}{0/1,2/3} + \frac{1}{4}\,\cgraph{0.4}{3}{-1/0,0/1}$, which is why it already appears in such form above.

Each of these coefficients is a closed-form expression~--- a linear combination of only \emph{finitely many} subgraph-counting functions.
This is a non-trivial property: decomposing in an arbitrary basis (e.g. the dual basis $\setp{G_i^*}$) generally gives an infinite linear combination.
This suggests that the higher coefficients could be of this form as well.

Since $w_{\sl(2)}$ takes value in polynomials and $[c^k]$ and $[c^{n-k}]$ are linear functions on degree $n$ polynomials, each coefficient of the polynomial is a weight system by itself.
In particular, extended $[c^{n-3}] w_{\sl(2)}$ is a function on arbitrary graphs and may be viewed as a linear function on the space spanned by graphs like any other graph function.
Viewing it this way, we may state our task as that of decomposing $[c^{n-3}] w_{\sl(2)}$ in the basis of subgraph-counting functions $f_i$.

\subsubsection{Linear regression}

To recover such a linear decomposition, we use the standard statistical method of \emph{linear regression}, commonly used in machine learning.
It represents each object by a real feature vector $x$ and predicts a target value as $\hat{y} = w^\top x = w_1 x_1 + \ldots + w_d x_d$, where $w$ is a vector of real weights fixed for a given model.
Given
\begin{itemize}
    \item a real feature matrix $X \in \R^{l \times d}$, whose rows correspond to objects (each object being a vector in $\R^d$) and whose columns correspond to the features computed on those objects, and
    \item a target vector $y \in \R^l$ holding the values to be predicted,
\end{itemize}
linear regression seeks the weight vector $w$ minimizing the squared Euclidean distance between $y$ and the predictions $Xw$:
\begin{align*}
    \min_w \norm{Xw - y}_2^2.
\end{align*}
This minimization is equivalent to the normal equation $X^\top X w = X^\top y$, and whenever $X$ has full column rank its solution exists and is unique:
\begin{align*}
    w = (X^\top X)^{-1} X^\top y.
\end{align*}

In our setting the objects are graphs on at most $n$ vertices, the target is the value of the sought coefficient computed on each of them, the features are the subgraph-counting functions; this finite feature matrix $F_{\leq n}$ is constructed in the next subsection.
Crucially, $F_{\leq n}$ is square and unitriangular, hence invertible, so the regression solution collapses to an \emph{exact} change of coordinates rather than a statistical fit:
\begin{align*}
    w = (F_{\leq n}^\top F_{\leq n})^{-1} F_{\leq n}^\top y
    = F_{\leq n}^{-1} (F_{\leq n}^\top)^{-1} F_{\leq n}^\top y
    = F_{\leq n}^{-1} y.
\end{align*}
\emph{Although linear regression is normally an approximate, overdetermined procedure, in our case it returns the exact and unique decomposition of the coefficient.}

According to Theorem~\ref{thm:convolution}, for the coefficient $[c^{n-3}]$ it suffices to consider graphs on at most $2 \cdot 3 = 6$ vertices.
$\dim{\mathbf{G}_{\leq 6}}$ is already 208, which makes computation by hand infeasible and justifies computational treatment.

\subsubsection{The feature matrix}

Let $\mathbf{G}_n$ be the set of graphs on $n$ vertices and $\mathbf{G}_{\leq n}$ be the set of graphs on $\leq n$ vertices, so that $\mathbf{G} = \bigsqcup_{i=1}^{\infty} \mathbf{G}_i$ and $\mathbf{G}_{\leq n} = \bigsqcup_{i=1}^{n} \mathbf{G}_i$. 

We fix an ordering of all graphs obeying the following rules:
\begin{enumerate}
    \item $v(G_i) < v(G_j) \Longrightarrow i < j$, i.e. among two graphs the one with the higher number of vertices has a larger index;
    \item $v(G_i) = v(G_j)$ and $e(G_i) < e(G_j) \Longrightarrow i < j$, i.e. for two graphs on the same number of vertices the one with the higher number of edges has a larger index;
    \item on graphs with the same numbers of vertices and edges any order may be chosen; we fix one and use it throughout.
\end{enumerate}
The third point leaves a degree of freedom; to avoid ambiguity, we use the ordering of the Graph Atlas~\cite{atlas} since this is the ordering of the library used in the computational part.
Once the ordering is fixed, we identify graphs with their indices and treat $\mathbf{G}_n$, $\mathbf{G}_{\leq n}$ as the corresponding sets of indices.

We now define the central object.
\begin{definition}\label{def:feature-matrix}
    Let $F$ be the infinite matrix whose entry $F_{i,j}$ equals the number of subgraphs of $G_i$ isomorphic to $G_j$.
    The row and column indices are those of the ordering above.
    Following the common machine-learning convention (rows are objects, columns are features), we call $F$ the \emph{feature matrix}.
\end{definition}

We treat $F$ as a formal object and carry out all linear algebra with its finite principal submatrices.
Matrix $F$ is lower unitriangular: by the chosen ordering a graph $G_j$ with $j > i$ can never be a subgraph of $G_i$, while the number of subgraphs of $G_i$ inside $G_i$ itself equals one, giving $1$ on the diagonal.

For index subsets $R, C, J$ we abbreviate submatrices of $F$ as $F_{R,C}$ (rows indexed by $R$, columns by $C$) and $F_J := F_{J,J}$.
In particular, the square matrix $F_{\mathbf{G}_{\leq n}}$ is again lower unitriangular.
\begin{figure}[H]
    \begin{tabular}{C|CCCCCCCC}
        & \emptyset & \cgraph{0.3}{1}{} & \cgraph{0.3}{2}{} & \cgraph{0.3}{2}{0/1} & \cgraph{0.3}{3}{} & \cgraph{0.3}{3}{1/2} & \cgraph{0.3}{3}{1/2,2/0} & \cgraph{0.3}{3}{0/1,1/2,2/0} \\
        \hline
        \emptyset & 1 \\
        \cgraph{0.3}{1}{} & 1 & 1 \\
        \cgraph{0.3}{2}{} & 1 & 2 & 1 \\
        \cgraph{0.3}{2}{0/1} & 1 & 2 & 1 & 1 \\
        \cgraph{0.3}{3}{} & 1 & 3 & 3 & 0 & 1 \\
        \cgraph{0.3}{3}{1/2} & 1 & 3 & 3 & 1 & 1 & 1 \\
        \cgraph{0.3}{3}{1/2,2/0} & 1 & 3 & 3 & 2 & 1 & 2 & 1 \\
        \cgraph{0.3}{3}{0/1,1/2,2/0} & 1 & 3 & 3 & 3 & 1 & 3 & 3 & 1
    \end{tabular}
    \caption{$F_{\mathbf{G}_{\leq 3}}$.}
\end{figure}

Write $f_j(G_i) := F_{i,j}$ for the $j$-th feature of the graph $G_i$, i.e. the number of subgraphs of $G_i$ isomorphic to $G_j$.
For each $n$, let $\mathbb{G}_{\leq n}$ be the vector space spanned by $\mathbf{G}_{\leq n}$ with basis $e = \setpp{G_j}{G_j \in \mathbf{G}_{\leq n}}$ and $\mathbb{G}_{\leq n}^*$ its dual space of linear functions.
Each feature $f_j$ with $G_j \in \mathbf{G}_{\leq n}$ restricts to an element of $\mathbb{G}_{\leq n}^*$.

The following lemma records the change of basis at each finite degree.

\begin{lemma}[Change of basis at degree $n$]\label{lem:change-of-basis}
    On $\mathbf{G}_{\leq n}$ features $\setpp{f_j}{G_j \in \mathbf{G}_{\leq n}}$ form a basis of the dual space, the coordinates $b_{\mathbf{G}_{\leq n}} = (b_j)_{G_j \in \mathbf{G}_{\leq n}}$ of arbitrary function $\tau|_{\mathbf{G}_{\leq n}}$ in this basis are
    \begin{align*}
        b_{\mathbf{G}_{\leq n}} = F_{\mathbf{G}_{\leq n}}^{-1} \tau(\mathbf{G}_{\leq n}),
        \qquad
        \tau|_{\mathbf{G}_{\leq n}} = \sum_{G_j \in \mathbf{G}_{\leq n}} b_j f_j .
    \end{align*}
    In particular, this decomposition exists and is unique.
\end{lemma}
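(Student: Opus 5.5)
The plan is to reduce the statement to the invertibility of the finite square matrix $F_{\mathbf{G}_{\leq n}}$, which is immediate from its unitriangular structure, and then to read off the coordinates. First, note that $\mathbb{G}_{\leq n}$ is finite dimensional with basis $e=\setpp{G_j}{G_j\in\mathbf{G}_{\leq n}}$. Hence $\mathbb{G}_{\leq n}^*$ has the same dimension $N=|\mathbf{G}_{\leq n}|$, and a linear function $\tau|_{\mathbf{G}_{\leq n}}$ is determined by its values on the basis $e$, that is, by the column vector $\tau(\mathbf{G}_{\leq n})=(\tau(G_i))_{G_i\in\mathbf{G}_{\leq n}}$. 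Likewise each feature $f_j$ is determined by its vector of values $(f_j(G_i))_i=(F_{i,j})_i$, which is the $j$-th column of $F_{\mathbf{G}_{\leq n}}$.

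Second, I will establish that $F_{\mathbf{G}_{\leq n}}$ is lower unitriangular. The ordering places graphs with fewer vertices first, and among graphs with equally many vertices it places those with fewer edges first. If $j>i$, then $G_j$ has either more vertices than $G_i$, or the same number of vertices and at least as many edges. In the first case $G_j$ cannot be a subgraph of $G_i$. In the second case a subgraph of $G_i$ isomorphic to $G_j$ would have to use all vertices and all edges of $G_i$, forcing $G_j\cong G_i$ and hence $j=i$; so for $j\neq i$ we get $F_{i,j}=0$. Finally $F_{i,i}=1$, since the only subgraph of $G_i$ with $|V(G_i)|$ vertices and $|E(G_i)|$ edges is $G_i$ itself. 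This gives $\det F_{\mathbf{G}_{\leq n}}=1$, so the matrix is invertible.

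Third, I will translate this into the dual-basis statement. The evaluation map $\mathbb{G}_{\leq n}^*\to\C^N$, $\tau\mapsto\tau(\mathbf{G}_{\leq n})$, is an isomorphism. Under it, the family $\setp{f_j}$ is sent to the columns of an invertible matrix, so it is linearly independent. Since there are $N=\dim\mathbb{G}_{\leq n}^*$ of them, it is a basis.

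Fourth, I will compute the coordinates. Writing $\tau|_{\mathbf{G}_{\leq n}}=\sum_j b_jf_j$ and evaluating at each $G_i$ gives $\tau(G_i)=\sum_j F_{i,j}b_j$, that is, $F_{\mathbf{G}_{\leq n}}b=\tau(\mathbf{G}_{\leq n})$. Hence $b=F_{\mathbf{G}_{\leq n}}^{-1}\tau(\mathbf{G}_{\leq n})$, and both existence and uniqueness follow from invertibility.

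There is no serious obstacle here. The only point needing care is the tie-breaking case in the second step (same numbers of vertices and edges), and the edge-count argument above settles it regardless of how the Graph Atlas orders such graphs.
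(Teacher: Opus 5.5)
Your proposal is correct and follows the paper's proof. As in the paper, the columns of $F_{\mathbf{G}_{\leq n}}$ are the features written in the dual basis $e^*$, unitriangularity makes this transition matrix invertible, and evaluating $\tau=\sum_j b_j f_j$ on each $G_i$ gives $F_{\mathbf{G}_{\leq n}}b=\tau(\mathbf{G}_{\leq n})$. Your only addition is an explicit check of lower unitriangularity, including the tie case of equal vertex and edge counts, which the paper simply asserts just before the lemma.
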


\begin{proof}
    The values $\tau(\mathbf{G}_{\leq n}) = (\tau(G_j))_{G_j \in \mathbf{G}_{\leq n}}$ are the coordinates of $\tau|_{\mathbf{G}_{\leq n}}$ in the dual basis $e^* = (G_j^*)_{G_j \in \mathbf{G}_{\leq n}}$.
    The columns of $F_{\mathbf{G}_{\leq n}}$ express the features $f_j$ in the basis $e^*$, so $F_{\mathbf{G}_{\leq n}}$ is the transition
    matrix from $e^*$ to $f$.
    Being lower unitriangular, it is invertible; hence the features $\setpp{f_j}{G_j \in \mathbf{G}_{\leq n}}$ form a basis and the coordinates in it are obtained by a single left multiplication by $F_{\mathbf{G}_{\leq n}}^{-1}$.
\end{proof}
	
\subsubsection{Reconstruction from finite restrictions}

The coordinates produced by Lemma~\ref{lem:change-of-basis} do not depend on the degree $n$ at which they are computed: increasing it only adds new coordinates and leaves existing ones untouched.

\begin{lemma}[Consistency of degrees $n$ and $n+1$]\label{lem:consistency}
    For every function $\tau$ the entries of $b_{\mathbf{G}_{\leq n+1}}$ indexed by $\mathbf{G}_{\leq n}$ coincide with $b_{\mathbf{G}_{\leq n}}$.
\end{lemma}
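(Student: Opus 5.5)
The argument rests on the block structure of the feature matrix. By the chosen ordering, every graph in $\mathbf{G}_{\leq n}$ has a smaller index than every graph in $\mathbf{G}_{n+1}$. Moreover, $F$ is lower unitriangular: $F_{i,j}=0$ whenever $j>i$, because a graph with more vertices (or, at the same vertex count, more edges) cannot be a subgraph of $G_i$. Hence the matrix splits into blocks
\begin{align*}
    F_{\mathbf{G}_{\leq n+1}} =
    \begin{pmatrix}
        F_{\mathbf{G}_{\leq n}} & 0 \\
        F_{\mathbf{G}_{n+1},\mathbf{G}_{\leq n}} & F_{\mathbf{G}_{n+1}}
    \end{pmatrix},
\end{align*}
where the upper-right block $F_{\mathbf{G}_{\leq n},\mathbf{G}_{n+1}}$ vanishes. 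The upper-left block is exactly the principal submatrix used at degree $n$, since the entries $F_{i,j}$ do not depend on which truncation we take.

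We use Lemma~\ref{lem:change-of-basis} at degree $n+1$. It gives $F_{\mathbf{G}_{\leq n+1}} b_{\mathbf{G}_{\leq n+1}} = \tau(\mathbf{G}_{\leq n+1})$. Write $b_{\mathbf{G}_{\leq n+1}}=(b',b'')$, split according to $\mathbf{G}_{\leq n}\sqcup\mathbf{G}_{n+1}$. Reading off the first block row and using the zero upper-right block, we get
\begin{align*}
    F_{\mathbf{G}_{\leq n}} b' = \tau(\mathbf{G}_{\leq n}).
\end{align*}
The matrix $F_{\mathbf{G}_{\leq n}}$ is unitriangular and hence invertible. Therefore $b' = F_{\mathbf{G}_{\leq n}}^{-1}\tau(\mathbf{G}_{\leq n}) = b_{\mathbf{G}_{\leq n}}$, which is the claim. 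The same conclusion can be phrased via uniqueness in Lemma~\ref{lem:change-of-basis}: the $f_j$ with $G_j\in\mathbf{G}_{n+1}$ vanish on $\mathbf{G}_{\leq n}$, so restricting $\sum_{j} b_j f_j$ to $\mathbf{G}_{\leq n}$ gives a decomposition of $\tau|_{\mathbf{G}_{\leq n}}$ using coefficients $b'$.

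The only point needing care is the vanishing of the upper-right block. It follows because a graph on $n+1$ vertices has no copies inside a graph on at most $n$ vertices. I expect no real obstacle beyond stating the block decomposition cleanly.
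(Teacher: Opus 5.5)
Your proof is correct and matches the paper's: both rest on the block lower-triangular form of $F_{\mathbf{G}_{\leq n+1}}$, whose upper-right block vanishes because a graph on $\leq n$ vertices contains no subgraph on $n+1$ vertices. The only difference is presentation: you read off the first block row of $F_{\mathbf{G}_{\leq n+1}} b_{\mathbf{G}_{\leq n+1}} = \tau(\mathbf{G}_{\leq n+1})$, whereas the paper writes out the block inverse and notes that its upper-left block is $F_{\mathbf{G}_{\leq n}}^{-1}$ and its upper-right block is zero.
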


\begin{proof}
    Consider the computation above carried out at degree $n+1$. Since the ordering places all graphs on $\leq n$ vertices before those on $n+1$ vertices, splitting $\mathbf{G}_{\leq n+1}$ into $\mathbf{G}_{\leq n}$ and $\mathbf{G}_{n+1}$ puts $F_{\mathbf{G}_{\leq n+1}}$ in the block form
    \begin{align*}
        F_{\mathbf{G}_{\leq n+1}} =
        \begin{pmatrix}
            F_{\mathbf{G}_{\leq n}} & 0 \\[3pt]
            B & F_{\mathbf{G}_{n+1}}
        \end{pmatrix},
        \qquad B := F_{\mathbf{G}_{n+1}, \mathbf{G}_{\leq n}}.
    \end{align*}
    The upper-right block is zero because a graph on $\leq n$ vertices has no subgraph on $n+1$ vertices, and both diagonal blocks are lower unitriangular.
    For such a block-triangular matrix the inverse has the same block structure with the diagonal blocks inverted in place:
    \begin{align*}
        F_{\mathbf{G}_{\leq n+1}}^{-1} =
        \begin{pmatrix}
            F_{\mathbf{G}_{\leq n}}^{-1} & 0 \\[3pt]
            -F_{\mathbf{G}_{n+1}}^{-1} B F_{\mathbf{G}_{\leq n}}^{-1} & F_{\mathbf{G}_{n+1}}^{-1}
        \end{pmatrix};
    \end{align*}
    in particular, its upper-left corner is exactly $F_{\mathbf{G}_{\leq n}}^{-1}$.
    As the upper-right block vanishes, the entries of $b_{\mathbf{G}_{\leq n+1}} = F_{\mathbf{G}_{\leq n+1}}^{-1} \tau(\mathbf{G}_{\leq n+1})$ indexed by $\mathbf{G}_{\leq n}$ depend only on $\tau(\mathbf{G}_{\leq n})$ and equal $F_{\mathbf{G}_{\leq n}}^{-1} \tau(\mathbf{G}_{\leq n}) = b_{\mathbf{G}_{\leq n}}$.
\end{proof}

\begin{corollary}\label{cor:reconstruction}
    The coefficients of a function $\tau$ in the feature basis $f$ are well-defined and can be reconstructed from finite degrees.
    They can be computed iteratively as
    \begin{align*}
        b_{\mathbf{G}_{\leq n}} = F_{\mathbf{G}_{\leq n}}^{-1} \tau(\mathbf{G}_{\leq n}),
    \end{align*}
    and the values obtained at degree $n$ are final: they are never altered at any larger degree.
\end{corollary}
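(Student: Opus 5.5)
The plan is to obtain the corollary directly from Lemma~\ref{lem:change-of-basis} and Lemma~\ref{lem:consistency}. First I would make precise what ``the coefficients of $\tau$ in the feature basis'' mean. The whole function $\tau$ should be a formal (possibly infinite) sum $\sum_j b_j f_j$. This sum is pointwise finite: $f_j(G_i)=F_{i,j}=0$ whenever $j>i$, by unitriangularity. So for each graph $G_i$ the value $\sum_j b_j f_j(G_i)$ involves only the indices $j\le i$, and the expression always makes sense.

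For \emph{existence}, I would define $b_j$ as the entry with index $j$ of $b_{\mathbf{G}_{\leq n}}=F_{\mathbf{G}_{\leq n}}^{-1}\tau(\mathbf{G}_{\leq n})$, for any $n\ge v(G_j)$. Independence of this choice of $n$ comes from Lemma~\ref{lem:consistency} by induction on $n$: the entries indexed by $\mathbf{G}_{\leq n}$ are preserved when passing from $n$ to $n+1$, hence to any $m\ge n$. Then for any graph $G_i$ with $n=v(G_i)$, Lemma~\ref{lem:change-of-basis} gives $\tau(G_i)=\sum_{G_j\in\mathbf{G}_{\leq n}} b_j f_j(G_i)$. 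Every remaining term with $v(G_j)>n$ vanishes, so $\tau=\sum_j b_j f_j$ holds on all graphs.

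For \emph{uniqueness}, suppose $\tau=\sum_j b'_j f_j$ pointwise. Restricting to $\mathbf{G}_{\leq n}$ kills all features with more than $n$ vertices. So $(b'_j)_{G_j\in\mathbf{G}_{\leq n}}$ is a decomposition of $\tau|_{\mathbf{G}_{\leq n}}$ in the basis of Lemma~\ref{lem:change-of-basis}, and therefore equals $b_{\mathbf{G}_{\leq n}}$. Letting $n$ range over all values gives $b'=b$.

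Finality follows from the same argument: the degree-$n$ block of coordinates is fixed by $\tau(\mathbf{G}_{\leq n})$ alone, and larger degrees only append new entries. The only delicate step is the justification that the infinite sum is meaningful, which is exactly the vanishing $F_{i,j}=0$ for $j>i$. Everything else is bookkeeping with the block-triangular inverse from Lemma~\ref{lem:consistency}.
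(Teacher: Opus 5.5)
Your proposal is correct and takes the same approach as the paper, which gives no separate proof and treats the corollary as an immediate consequence of Lemma~\ref{lem:change-of-basis} and Lemma~\ref{lem:consistency}. Your extra points are not in the paper but are sound: pointwise finiteness of $\sum_j b_j f_j$ (via $F_{i,j}=0$ for $j>i$) and uniqueness by restricting to $\mathbf{G}_{\leq n}$. They spell out what the paper's phrase ``well-defined'' leaves implicit.
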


Now we can obtain a formula in a form similar to~\cite{F80}.

\begin{theorem}\label{thm:cn3-formula}
	Applying Corollary~\ref{cor:reconstruction} with $\tau = [c^{n-3}]$ over $\mathbf{G}_{\leq 6}$ yields the following closed formula:
	\begin{align*}
		[c^{n-3}] = {}& -\frac{1}{8}\tcgraph{4}{0/1,0/2,0/3} - \frac{1}{8}\tcgraph{4}{0/1,1/2,2/3} + \frac{1}{8}\tcgraph{4}{0/1,1/2,2/3,3/1} - \frac{1}{8}\tcgraph{4}{0/1,0/2,0/3,1/2,3/2} + \frac{3}{8}\tcgraph{4}{0/1,0/2,0/3,1/2,1/3,2/3} - \frac{1}{8}\tcgraph{5}{-1/0,0/1,2/3} \\
		& + \frac{1}{8}\tcgraph{5}{-1/0,0/1,1/-1,2/3} - \frac{1}{4}\tcgraph{5}{0/1,1/2,2/3,3/4,4/1} - \frac{3}{8}\tcgraph{5}{0/1,1/2,2/3,3/4,4/0} + \frac{1}{4}\tcgraph{5}{-1/0,0/1,1/-1,1/2,2/3,3/4} + \frac{1}{4}\tcgraph{5}{-1/0,0/1,1/2,1/3,-1/-2,-1/-3} + \frac{1}{2}\tcgraph{5}{0/1,1/2,1/3,1/4,2/3,2/4,3/4} \\
		& - \frac{1}{2}\tcgraph{5}{0/1,0/2,0/4,1/2,1/4,2/3,2/4,3/4} - \frac{1}{2}\tcgraph{5}{0/1,0/-1,0/2,0/-2,-1/1,1/2,-1/-2,-2/2} + \frac{3}{2}\tcgraph{5}{0/1,0/2,0/3,0/4,1/2,1/3,1/4,2/4,3/4} - \frac{7}{2}\tcgraph{5}{0/1,0/2,0/3,0/4,1/2,1/3,1/4,2/3,2/4,3/4} - \frac{1}{8}\tcgraph{6}{0/1,2/5,3/4} - \frac{1}{4}\tcgraph{6}{0/1,2/3,3/4,4/5,5/2} \\
		& + \frac{1}{4}\tcgraph{6}{0/1,1/2,2/3,3/4,4/5,5/0} + \frac{1}{2}\tcgraph{6}{0/1,2/3,2/4,2/5,3/4,3/5,4/5} - \frac{1}{2}\tcgraph{6}{0/-1,0/3,0/1,1/2,-1/-2,2/3,-2/-3} - \frac{1}{2}\tcgraph{6}{0/1,0/2,0/5,1/2,1/3,2/3,3/4,4/5} + \tcgraph{6}{0/-1,0/1,1/2,-1/-2,1/-2,-1/2,2/3,-2/-3} + \tcgraph{6}{0/1,0/2,0/3,0/5,1/2,1/3,2/3,3/4,4/5} + \tcgraph{6}{0/1,0/4,0/5,1/2,2/3,2/5,3/4,3/5,4/5} \\
		& + \frac{1}{2}\tcgraph{6}{0/-1,0/3,0/1,-1/1,1/2,-1/-2,2/3,-2/-3,2/-2} - 3\tcgraph{6}{0/-1,0/3,0/1,1/2,1/4,2/3,2/5,3/4,4/5} - 2\tcgraph{6}{0/1,0/2,0/4,0/5,1/2,1/4,1/5,2/3,3/4,4/5} - 2\tcgraph{6}{0/1,0/4,0/5,1/2,1/5,2/3,2/5,3/4,3/5,4/5} - \tcgraph{6}{0/1,0/2,0/5,1/2,1/3,1/5,2/3,2/4,3/4,4/5} + \tcgraph{6}{0/1,0/4,0/5,1/2,1/3,1/5,2/3,2/4,3/4,4/5} + 4\tcgraph{6}{0/1,0/2,0/3,0/5,1/2,1/3,1/5,2/3,2/5,3/4,4/5} \\
		& - 2\tcgraph{6}{0/1,0/4,0/5,1/2,1/3,1/4,1/5,2/3,2/4,3/4,4/5} + 2\tcgraph{6}{0/1,0/4,0/5,1/2,1/4,1/5,2/3,2/4,3/4,3/5,4/5} + 4\tcgraph{6}{0/1,0/3,0/-1,-1/1,1/2,-1/-2,1/-2,-1/2,-2/2,2/3,-2/-3} - 4\tcgraph{6}{0/1,0/2,0/4,0/5,1/2,1/3,1/4,1/5,2/3,3/4,3/5,4/5} + 2\tcgraph{6}{0/1,0/-1,0/2,0/-2,1/2,1/3,1/5,2/3,2/4,3/4,3/5,4/5} + 4\tcgraph{6}{0/1,0/-1,0/2,0/-2,0/3,1/2,1/3,1/5,2/3,2/4,3/4,3/5,4/5} - 8\tcgraph{6}{0/1,0/-1,0/2,0/-2,0/3,1/2,1/3,1/4,1/5,2/3,2/4,2/5,3/4,3/5,4/5}.
	\end{align*}
\end{theorem}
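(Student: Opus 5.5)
The formula is an exact change of coordinates, so I will prove it by reducing to a finite computation and then arguing that the finite answer holds on all graphs. The first step is to fix $\tau = [c^{n-3}]w_{\sl(2)}$ as a function on $\mathbf{G}$. The Corollary after Theorem~\ref{thm:convolution} (case $k=3$) says this coefficient extends to a graph 4-invariant. By Theorem~\ref{thm:convolution}, that extension is determined by its restriction $f_6$ to $\mathbf{G}_{\leq 6}$: for $|V|\geq 6$ it equals $\sum_{|V'|=6} f_6(G|_{V'})$. The extension on $\leq 6$ vertices is unique by the $\leq 8$ computation of Section~\ref{section:lando}, so the values $\tau(\mathbf{G}_{\leq 6})$ form a well-defined vector of 208 rationals.

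Second, I will obtain these values. On intersection graphs I will use the Chmutov--Varchenko recursion~(\ref{rel:chord-deletion}) on any realizing chord diagram. For the two non-intersection graphs on 6 vertices I will take the values from the extended table of Section~\ref{section:lando}, which are forced by 4T-relations. I then apply Lemma~\ref{lem:change-of-basis} to get $b_{\mathbf{G}_{\leq 6}} = F_{\mathbf{G}_{\leq 6}}^{-1}\tau(\mathbf{G}_{\leq 6})$, computed exactly over $\Q$ by forward substitution, since $F$ is lower unitriangular. Reading off the nonzero entries should reproduce the displayed coefficients. As sanity checks, the entries on graphs with at most 3 vertices must vanish, and the 4-vertex part must be consistent with the known structure of $[c^{n-2}]$.

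Third, I will show the formula holds on all graphs, not only on $\mathbf{G}_{\leq 6}$. Write $S(G)=\sum_j b_j f_j(G)$ with the sum over $G_j\in\mathbf{G}_{\leq 6}$. This is a finite combination of subgraph counts, and each count $f_j$ decomposes as $f_j(G)=\sum_{|V'|=6} f_j(G|_{V'})/\binom{|V|-|V(G_j)|}{6-|V(G_j)|}$. That normalizing factor depends on $n$, which is exactly the subtle point. I will handle it by writing both sides as convolutions with $\mathbf{1}$ and comparing degrees.

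Concretely, define $g(G)=\sum_j b_j\,[G\cong G_j]$, supported on graphs with at most 6 vertices. Then $S = \mathbf{1}\cdot g$ in the convolution algebra, because $f_j=\mathbf{1}\cdot G_j^*$. On $\mathbf{G}_{\leq 6}$ we have $S=\tau$. By Lemma~\ref{lem:consistency}, $b_j=0$ for every $j$ with $|V(G_j)|\geq 7$ provided $\tau$ itself has the form $\mathbf{1}\cdot h$ with $h$ supported on $\leq 6$ vertices. Theorem~\ref{thm:convolution} gives exactly this kind of form, $\tau = \mathbf{1}_{n-6}\cdot f_6$, but only in its degree-graded version, whereas $\mathbf{1}$ in $S$ is the full constant function.

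So the main obstacle is reconciling $\mathbf{1}_{n-6}\cdot f_6$ on $n$-vertex graphs with $\mathbf{1}\cdot g$. My plan is to set $g = \mu\cdot f_6$, where $\mu$ is the convolution inverse of $\mathbf{1}$ (namely $(-1)^{|V|}$) truncated appropriately. Then $\mathbf{1}\cdot g$ restricted to $n$ vertices equals $\sum_{|V'|\leq 6}$ of $(\mu\cdot f_6)$ summed over induced subgraphs. Since $f_6$ on fewer than 6 vertices is itself $\tau$ there, Möbius inversion on subsets should collapse this sum to $\sum_{|V'|=6} f_6(G|_{V'})$. I expect the needed identity to be that $\tau$ on $m<6$ vertices equals $\sum_{|V'|=m}$ of its own degree-$m$ part. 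This follows because $[c^{m-3}]$ on $m$-vertex graphs is a convolution of the same family; Theorem~3 of~\cite{ChV97} states this uniformly in $m$.

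I will verify this support claim computationally as well: computing $F_{\mathbf{G}_{\leq 7}}^{-1}\tau(\mathbf{G}_{\leq 7})$ with the 7-vertex values from the table should give zero coordinates on $\mathbf{G}_7$.
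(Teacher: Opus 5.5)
Your Steps 1--2 are exactly the paper's proof. The paper simply computes $\tau(\mathbf{G}_{\leq 6})$ and $F_{\mathbf{G}_{\leq 6}}^{-1}\tau(\mathbf{G}_{\leq 6})$ in the cited repository and reads off the coefficients, so on the computational content you agree with it.

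The genuine problem is Step 3, which cannot work as planned. You correctly identified the $n$-dependent factor as the crux, but resolved it in the wrong direction. You try to obtain "$\tau=\mathbf{1}\cdot g$ with $g$ supported on $\leq 6$ vertices" from the graded form $\tau=\mathbf{1}_{n-6}\cdot f_6$ of Theorem~\ref{thm:convolution}, with one fixed $f_6=\tau|_{\mathbf{G}_6}$. These two shapes are incompatible once $g$ has support below $6$ vertices, and the displayed formula shows that it does. For $n\ge 6$,
$\sum_{|U|=6}(\mathbf{1}\cdot g)(G|_U)=\sum_{m\le 6}\binom{n-m}{6-m}\sum_{|W|=m}g(G|_W)$, and no M\"obius inversion removes these binomial factors.

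In fact the graded identity is false for the true coefficient. Take the intersection graph $G=P_4\sqcup 3K_1$. Then $w_{\sl(2)}(G)=c^4(c-\tfrac12)^3$, so $[c^{4}]w_{\sl(2)}(G)=-\tfrac18$, and the displayed formula reproduces this value. But the sum of $[c^3]w_{\sl(2)}$ over the seven $6$-vertex induced subgraphs is $-\tfrac38$; only the three copies of $P_4\sqcup 2K_1$ contribute. The same happens for $[c^{n-2}]$ on $P_3\sqcup 2K_1$, where the true value is $\tfrac14$ and the graded sum gives $\tfrac12$. So the graded form, taken at face value, would force nonzero coordinates on $\mathbf{G}_7$ rather than rule them out. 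Your closing identity ($\tau$ on $m$ vertices equals the sum over $m$-subsets of itself) is a tautology and supplies nothing.

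What Step 3 actually needs is the \emph{ungraded} statement. Let $\mu(G)=(-1)^{|V|}$. Then $h=\mu\cdot\tau$ must vanish on intersection graphs with more than $6$ vertices, i.e. $[c^{n-3}]w_{\sl(2)}(G)=\sum_{W\subseteq V,\,|W|\le 6}h(G|_W)$ with $h$ independent of $n$. One way to get this is from the known bound $\deg_c w_{\sl(2)}(p)\le \deg(p)/2$ for primitive $p$. Expand $w_{\sl(2)}(D)$ over set partitions of the chords into products of values on primitive projections: singleton blocks contribute $c$ each, and non-singleton blocks of total size $s$ contribute degree at most $s/2$, so reaching $c^{n-3}$ forces $s\le 6$.

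Given this, your uniqueness-of-coordinates argument finishes the job on intersection graphs. On the other graphs the formula \emph{defines} $\mathbf{1}\cdot\tilde h$, where $\tilde h$ is the truncation of $\mu\cdot\tilde\tau$ to $\le 6$ vertices. This is a 4-invariant by the convolution property, so it coincides with the unique table up to $8$ vertices. A check on $\mathbf{G}_7$ can corroborate this but cannot replace it.

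Finally, $f_j\ne\mathbf{1}\cdot G_j^*$. The features count not necessarily induced subgraphs: in $F_{\mathbf{G}_{\le 3}}$ the path has three copies of the edgeless two-vertex graph. The correct identity is $f_j=\mathbf{1}\cdot h_j$, where $h_j(H)$ is the number of spanning subgraphs of $H$ isomorphic to $G_j$. Your support argument survives this correction.
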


\begin{proof}
    The data and the linear-regression pipeline used to obtain this formula are available at \url{https://github.com/Serdabel/w_sl2.a_3}.
\end{proof}

\printbibliography

\end{document}